\documentclass[11pt,letterpaper]{amsart}
\usepackage[letterpaper,hmargin=1.25in,vmargin=1.5in]{geometry}
\usepackage{latexsym, amssymb, amsmath}
\usepackage{booktabs} 
\usepackage{multirow}
\usepackage{xcolor}

\newtheorem{theorem}{Theorem}[section]
\newtheorem{lemma}[theorem]{Lemma}

\newtheorem{proposition}[theorem]{Proposition}
\newtheorem{remark}[theorem]{Remark}
\theoremstyle{definition}
\newtheorem{definition}[theorem]{Definition}
\newtheorem{example}{Example}

\makeatletter
    
    \@addtoreset{equation}{section}
  \makeatother

\begin{document}

\title[$\lambda$-biharmonic Riemannian submersions]{$\lambda$-biharmonic Riemannian submersions from manifolds with constant sectional curvature}

\author{Shun Maeta}
\address{Department of Mathematics, Chiba University, 1-33, Yayoicho, Inage, Chiba, 263-8522, Japan.}
\curraddr{}
\email{shun.maeta@faculty.gs.chiba-u.jp~{\em or}~shun.maeta@gmail.com}

\author{Miho Shito}
\address{Department of Mathematics, Chiba University, 1-33, Yayoicho, Inage, Chiba, 263-8522, Japan.}
\email{shito.miho@gmail.com}
\thanks{The first author is partially supported by the Grant-in-Aid for Scientific Research (C), No.23K03107, Japan Society for the Promotion of Science.}
\subjclass[2010]{58E20, 53C43}

\date{}

\dedicatory{}

\keywords{biharmonic maps, biharmonic submersion}

\commby{}

\begin{abstract}

In this paper, we study $\lambda$-biharmonic Riemannian submersions, which generalize biharmonic Riemannian submersions. 
We prove nonexistence results for $\lambda$-biharmonic Riemannian submersions from $(n+1)$-dimensional Riemannian manifolds with constant sectional curvature $c$ to $n$-dimensional Riemannian manifolds. 
Our results show that the critical value $\lambda=2(n-1)c$ plays a decisive role. 
We prove that if $c \ge 0$, or if $c<0$ and $\lambda \neq 2(n-1)c$, 
then any $\lambda$-biharmonic Riemannian submersion must be harmonic.
On the other hand, when $\lambda=2(n-1)c$ with $c<0$, 
we construct explicit examples of $\lambda$-biharmonic Riemannian submersions.
 \end{abstract}

\maketitle



\section{Introduction}\label{intro}

A harmonic map is a critical point of the energy functional
\[
E(\phi)=\frac12\int_M |d\phi|^2\,d\mu_g,
\]
where $\phi:(M,g)\to(N,h)$ is a smooth map. 
Equivalently, $\phi$ is harmonic if and only if its tension field
\[
\tau(\phi)=\operatorname{tr}_g(\nabla d\phi)
\]
vanishes identically \cite{EL1983,ES1964}. 
Harmonic maps provide a fundamental tool for studying the geometry of manifolds.

When harmonic maps do not exist under given geometric or topological constraints, 
it is natural to consider maps that minimize the failure of harmonicity \cite{LM2008}. 
A natural measure of this failure is the bienergy
\[
E_2(\phi)=\frac12\int_M |\tau(\phi)|^2\,d\mu_g,
\]
introduced by Eells and Lemaire \cite{EL1983}. 
Its critical points are called biharmonic maps.

Jiang \cite{J1986} derived the first variation formula for the bienergy and showed 
that $\phi$ is biharmonic if and only if $\tau_2(\phi)=0$, where
\begin{equation}\label{Jiang}
\tau_2(\phi)
=
\left(
\nabla^{\phi}_{e_i}\nabla^{\phi}_{e_i}
-
\nabla^{\phi}_{\nabla^M_{e_i} e_i}
\right)\tau(\phi)
-
R^N\bigl(d\phi(e_i),\tau(\phi)\bigr)d\phi(e_i).
\end{equation}
The vector field $\tau_2(\phi)$ is called the bitension field of $\phi$.
Here, $\nabla^{\phi}$ denotes the induced connection, and $R^N$ denotes the Riemannian curvature tensor of $N$, defined by
\[
R^N(X, Y)Z = \nabla^N_X \nabla^N_Y Z - \nabla^N_Y \nabla^N_X Z - \nabla^N_{[X,Y]} Z,
\]
for vector fields $X, Y, Z$ on $N$, and $\{e_i\}$  is a local orthonormal frame field of $M$.
Since every harmonic map is biharmonic, 
it is natural to study \textbf{proper biharmonic maps}, namely, biharmonic maps that are not harmonic.
Consequently, as represented by Chen's conjecture \cite{C1991} 
and the Balmu\c{s}--Montaldo--Oniciuc conjecture \cite{BMO2008}, 
much of the research has focused on the existence and nonexistence of proper biharmonic maps 
\cite{BMO2010,BLO2021,CMO2001,CMO2002,CI1998,D1998,FZ2021,FZ2023,FO,GLV2021,HV1995,LM2017,Maeta2014,Maeta2017,MOR2016,NU2011,CM2013,OT2012,OC2020}.
In our previous work \cite{MS2025}, we classified biharmonic Riemannian submersions from manifolds with constant sectional curvature.

Besides the unconstrained variational problem for the bienergy, 
one may also consider a constrained version involving the energy functional.
In 2008, Loubeau and Montaldo \cite{LM2008} introduced the concept of biminimal immersions 
by considering a constrained variational problem using a Lagrange multiplier $\lambda$.
In this paper, we adapt this idea to the setting of Riemannian submersions.

Riemannian submersions are a fundamental tool
for understanding the structure of a manifold 
by decomposing it into a lower-dimensional base space and fibers \cite{ON1966}.
While biharmonic immersions have been extensively investigated, 
their dual case, biharmonic Riemannian submersions, has remained comparatively less explored.
Nevertheless, this topic has attracted increasing attention in recent years
 \cite{O2002,AO2019,WO2025,WO2024,WO2023-1,WO2011,Ou2024}.
 An important early contribution is due to Oniciuc \cite{O2002}, who studied biharmonic Riemannian submersions under the assumption that the tension field is basic.
He proved nonexistence results under nonpositive Ricci curvature and provided examples 
when the tension field is a unit Killing vector field.
These nonexistence results were derived by means of a global Bochner--Weitzenb\"ock argument, 
in the spirit of Jiang's classical approach to biharmonic maps~\cite{J1986}.
Later, Wang and Ou classified biharmonic Riemannian submersions from 3-dimensional Riemannian manifolds with constant sectional curvature into a surface.~\cite{WO2011}
and extended the theory to 3-dimensional BCV spaces~\cite{WO2024} and product spaces $M^2 \times \mathbb{R}$~\cite{WO2025}.

In this paper, we consider Riemannian submersions
\[
\phi:(M^{n+1},g)\to (N^n,h),
\]
that is, the case of one-dimensional fibers.
Our main goal is to classify $\lambda$-biharmonic Riemannian submersions from manifolds with constant sectional curvature.
To this end, we define a \textbf{$\lambda$-biharmonic Riemannian submersion} as follows.
\begin{definition}[\cite{E1974}, \cite{LM2008}]

Let $\phi : (M^{m}, g) \to (N^{n}, h)$ be a Riemannian submersion between Riemannian manifolds.
If there exists a real constant $\lambda \in \mathbb{R}$ such that $\phi$ is a critical point of the $\lambda$-bienergy
\[
E_{2,\lambda}(\phi) := E_2(\phi) + \lambda E(\phi),
\]
then $\phi$ is called a $\lambda$-biharmonic Riemannian submersion.
In this case, $\phi$ satisfies
\[
\tau_{2,\lambda}(\phi) := \tau_2(\phi) - \lambda \tau(\phi) = 0,
\]
which follows from the Euler--Lagrange equations for harmonic and biharmonic maps.
The vector field $\tau_{2,\lambda}(\phi)$ is called the \textbf{$\lambda$-bitension field} of $\phi$.
\end{definition}
\begin{remark}
When $\lambda=0$, the $\lambda$-bienergy functional reduces to the classical bienergy $E_2(\phi)$.
Accordingly, a $0$-biharmonic Riemannian submersion coincides with the usual biharmonic Riemannian submersion.
Thus, the notion of a $\lambda$-biharmonic Riemannian submersion naturally generalizes the classical theory,
encompassing the case $\lambda=0$ and unifying the constrained and unconstrained approaches.
\end{remark}

\begin{definition}
A $\lambda$-biharmonic map $\phi$ is called \textbf{proper} if it is not harmonic,
that is,
\[
\tau_{2,\lambda}(\phi)=0
\quad \text{and} \quad
\tau(\phi)\neq 0.
\]
\end{definition}

Our results show that the critical value 
\[
\lambda = 2(n-1)c
\] 
plays a decisive role 
in the classification of $\lambda$-biharmonic Riemannian submersions 
from manifolds with constant sectional curvature $c$.
A classification of the existence and nonexistence of 
proper $\lambda$-biharmonic Riemannian submersions is presented in TABLE~\ref{class}.
Cases (a), (b), and (c) in TABLE~\ref{class} are obtained in \cite{MS2025}.
The main result is the following local classification theorem.

\begin{theorem}\label{thA}
For $n\geq2$, 
let $\phi : (M^{n+1}(c), g) \longrightarrow (N^n, h)$ be a Riemannian submersion from a Riemannian manifold with constant sectional curvature $c$. Assume that one of the following holds.
\begin{enumerate}
    \item $c\geq 0$,  
    \item $c<0$ and $\lambda \neq 2(n-1)c$.
\end{enumerate}

Then $\phi$ is $\lambda$-biharmonic if and only if it is harmonic.
\end{theorem}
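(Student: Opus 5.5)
We work throughout with an adapted local orthonormal frame $\{e_1,\dots,e_n,e_{n+1}=V\}$ on $M$, where $e_1,\dots,e_n$ are basic horizontal fields $\phi$-related to an orthonormal frame $\{\varepsilon_i\}$ on $N$, and $V$ is a unit vertical field. Write $\nabla_{e_i}e_j$ in terms of the integrability tensor $A$ (with $A_{e_i}e_j=\frac12[e_i,e_j]^{\mathcal V}=a_{ij}V$, $a_{ij}=-a_{ji}$) and the mean curvature of the fibers, $\nabla_VV=\sum_i \kappa_i e_i$. For one-dimensional fibers $\tau(\phi)=-n\,d\phi(\nabla_VV)$; up to normalization, $\tau(\phi)=-\sum_i\kappa_i\varepsilon_i$. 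Following the approach of \cite{MS2025}, we first expand $\tau_{2,\lambda}(\phi)=\tau_2(\phi)-\lambda\tau(\phi)$ using \eqref{Jiang}. O'Neill's formulas express $R^N$ through $R^M$ and $A$. Since $R^M$ has constant curvature $c$, the curvature terms become $-(n-1)c\,\tau(\phi)$ plus $A$-dependent terms. The $\lambda$-biharmonic equation then turns into a system of PDEs in the functions $\kappa_i,a_{ij}$. The rough-Laplacian part contributes derivatives $e_je_j\kappa_i$, $VV\kappa_i$ and terms quadratic in $a_{ij}$ and $\kappa$.

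The next step is to use the Gauss–Codazzi type identities that constant curvature imposes on a Riemannian submersion with one-dimensional fibers. These are O'Neill's equations for $\langle R(X,V)Y,V\rangle$, $\langle R(X,Y)Z,V\rangle$ and $\langle R(X,V)V,Y\rangle=c\langle X,Y\rangle$. They give the following relations:
\[
e_j(\kappa_i)+\ldots=-\sum_k a_{ik}a_{kj}+\kappa_i\kappa_j - c\,\delta_{ij}\ \text{(symmetrized)},\qquad V(a_{ij})\ \text{and}\ e_k(a_{ij})\ \text{in terms of } \kappa.
\]
Combining these with the bitension system, we should be able to eliminate the second derivatives. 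In particular, the horizontal components should reduce to an equation of the form
\[
\bigl(\lambda-2(n-1)c\bigr)\kappa_i+Q_i(\kappa,a)=0,
\]
where $Q_i$ is a combination forced to vanish or to have a sign. This is where the critical value $2(n-1)c$ should appear: one $(n-1)c$ comes from the $R^N$ term and another from the $VV\kappa_i$ term via the Riccati-type equation $V(\kappa_i)$.

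Now split into cases. If $\lambda\neq 2(n-1)c$, we take the inner product with $\kappa$ and use the algebraic constraints on $a_{ij}$. For instance, when $A\neq0$ the Codazzi identity forces $\kappa$ to lie in a kernel-type relation with $a$, and when $A=0$ the horizontal distribution is integrable and the fibers are geodesic-type or umbilical. With these we aim to show $|\kappa|^2$ satisfies a polynomial identity with nonzero leading coefficient. Differentiating along $V$ and the $e_j$ then forces $\kappa\equiv0$ on an open dense set, hence $\tau(\phi)=0$. If $c\ge0$ and $\lambda=2(n-1)c$, the residual equations should give a sum-of-squares identity, roughly $|\nabla\kappa|^2+c|\kappa|^2+|A|^2|\kappa|^2=0$, again forcing $\kappa=0$. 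When $c=0$ and $\lambda=0$ this reduces to \cite{MS2025}.

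I expect the main obstacle to be the case analysis on the rank of the antisymmetric matrix $(a_{ij})$ combined with the propagation argument. The first thing I would try is to use the identity $V(|\kappa|^2)$ together with the vertical component of $\tau_{2,\lambda}$ to derive an ODE along the fibers, and to show its only solution compatible with the horizontal equations is $\kappa=0$ unless $c<0$ and $\lambda=2(n-1)c$.
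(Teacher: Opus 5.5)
Your proposal is an outline of expectations rather than a proof. The decisive reductions are only announced (``should reduce'', ``roughly''), and the mechanisms you predict are not the ones that actually occur.

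\textbf{Missing ingredient 1: fiber-constancy of the data.} The bitension system contains $e_{n+1}e_{n+1}(\kappa_k)$. The constant-curvature O'Neill/Codazzi identities do not determine $e_{n+1}(\kappa_i)$ at all; the Riccati-type equations control horizontal derivatives such as $e_1(\kappa_1)=\kappa_1^2+c-\sigma_{12}^2$, not $V(\kappa_i)$. The paper therefore proves separately (Lemma~\ref{lem:fiber-constant}) that $e_{n+1}(\kappa_i)=0$:
\begin{itemize}
\item differentiating the Codazzi equation $R^M_{a(n+1)cd}=0$ along $e_{n+1}$ gives $\sigma_{ij}\,e_{n+1}(\kappa_k)=0$, so wherever $e_{n+1}(\kappa_a)\neq0$ one has $\sigma\equiv0$;
\item there the $\lambda$-biharmonic equation becomes the fiber ODE $e_{n+1}e_{n+1}(\kappa_a)=-\kappa_a(|\kappa|^2+(2n-1)c-\lambda)$;
\item second fiber derivatives of the curvature identities then give an algebraic system whose case analysis is contradictory. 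For $n=2$, $c<0$, this step already uses $\lambda\neq 2c$.
\end{itemize}
Your suggestion to use ``the vertical component of $\tau_{2,\lambda}$'' cannot replace this step. $\tau_{2,\lambda}(\phi)$ is a section of $\phi^{-1}TN$ and has no vertical component. Also, for one-dimensional fibers $\tau(\phi)=-d\phi(\nabla_VV)$, with no factor $n$.

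\textbf{Missing ingredient 2: how the critical value actually appears.} The paper uses the normal form of Proposition~\ref{proposition2.2}: $\kappa=\kappa_1e_1$ and $\sigma_{i,i+j}=0$ for $j\ge2$. This replaces your proposed rank analysis of $(a_{ij})$. Even with fiber-constancy and this normal form, the horizontal equation is not of the form $(\lambda-2(n-1)c)\kappa_i+Q_i=0$ with $Q_i$ of definite sign. It gives
$-\sigma_{12}^2+2\sum_{i,m}\sigma_{im}^2-\kappa_1^2-(2n-1)c+\lambda=0$,
whose non-constant part has no sign. The critical value appears only after differentiating this along $e_1$ and using the Riccati equation, which produces
$4\sigma_{12}^2+\sigma_{12}^2\sigma_{23}^2/\kappa_1^2=\lambda-2(n-1)c$.
\begin{itemize}
\item For $\lambda<2(n-1)c$ this is already contradictory.
\item For $\lambda=2(n-1)c$ it forces $\sigma_{12}=0$, then inductively $\sigma\equiv0$ and $\kappa_1^2=-c$. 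So the case $c\ge0$, $\lambda=2(n-1)c$ is settled locally and algebraically. The sum-of-squares identity you postulate is neither derived nor needed. A Bochner-type identity would moreover be integral and need compactness, whereas the theorem is local.
\item The hard case is $\lambda>2(n-1)c$. For $n\ge3$ one must differentiate along $e_3$ to obtain $\sigma_{23}^2+\sigma_{34}^2+4\kappa_1^2=c$, which excludes $c\le0$. For $c>0$, two further $e_1$-derivatives are combined with a delicate system of inequalities in $\sigma_{12}^2/c$, $\kappa_1^2/c$ and $(\lambda-2(n-1)c)/c$.
\end{itemize}
Your plan (``a polynomial identity for $|\kappa|^2$ with nonzero leading coefficient'') gives no route through this last case, which is the core of the theorem.
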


Our proof is based on a reduction of the $\lambda$-biharmonic equation via integrability data 
and a detailed analysis of the resulting system under the constant sectional curvature assumption. 
In particular, our arguments do not rely on the Bochner--Weitzenb\"ock formula, which is global in nature.

For $c < 0$, at the critical value $\lambda = 2(n-1)c$,
Section 3 provides explicit examples of proper $\lambda$-biharmonic Riemannian submersions
\[
H^m(c)\longrightarrow H^n(c)
\]
for every $m > n$, where $H^k(c)$ denotes the $k$-dimensional hyperbolic space of constant curvature $c$.
In particular, these examples include the one-dimensional fiber case $m = n + 1$.

\begin{table}[htbp]
\centering
\caption{Classification of the existence/nonexistence of proper
$\lambda$-biharmonic Riemannian submersions with one-dimensional fibers
($n\ge2$) according to the curvature $c$ and the constant $\lambda$.}

\label{class}
\begin{tabular}{cccc}
\hline
Curvature $c$ 
& $\lambda=0$ (see \cite{MS2025})
& $\lambda=2(n-1)c$ 
& Other $\lambda$ \\ \hline
$c>0$ 
& (a) Does not exist 
& (d) Does not exist 
& (g) Does not exist  \\ 
$c=0$ 
& (b) Does not exist 
& 
\begin{tabular}{c}
(e) Does not exist\\
(same as (b))
\end{tabular}
& (h) Does not exist \\ 
$c<0$ 
& (c) Does not exist 
& (f) Exists in every dimension
& (i) Does not exist  \\ \hline
\end{tabular}
\end{table}

\section{Preliminaries}
In this section, we recall basic definitions and notations used throughout the paper. 

In 2011, Wang and Ou \cite{WO2011} introduced the integrability data 
$\{f_1, f_2, \kappa_1, \kappa_2, \sigma\}$ 
to analyze the structure of Riemannian submersions from 3-dimensional manifolds.
This approach provides a powerful framework for analyzing the biharmonic equation 
originally derived by Jiang \cite{J1986} (see \eqref{Jiang}),
enabling us to reduce the equation to a system of scalar functions on the manifold.

 Later, in 2019, Akyol and Ou \cite{AO2019} generalized these integrability data to the form $\{f_{ij}^k, \kappa_i, \sigma_{ij}\}$ for arbitrary dimensions.
The specific settings and definitions are as follows.

Let $\phi : (M^{n+1}, g) \to (N^n, h)$ be a Riemannian submersion.  
A local orthonormal frame adapted to $\phi$ consists of horizontal lifts of vector fields from the base manifold $N$, which locally span the horizontal distribution on $M$.  
Since basic vector fields locally span the horizontal distribution, such a frame can always be constructed (cf. page 2 in \cite{Mari}).

Let $\{e_1, \dots, e_n, e_{n+1}\}$ be an orthonormal frame adapted to the submersion $\phi$, where $e_1, \dots, e_n$ are horizontal lifts of the orthonormal frame $\{\varepsilon_1, \dots, \varepsilon_n\}$ on $N$, and $e_{n+1}$ is vertical along the fiber.
Since our argument is local, we fix such a frame on a sufficiently small open subset and suppress this restriction from the notation.

The Lie bracket $[e_i, e_{n+1}]$ is vertical and does not contribute to the horizontal component 
(Lemma 3, \cite{ON1966}).
Moreover, the bracket $[e_i, e_j]$ of horizontal lifts corresponds under $\phi$ to the bracket $[\varepsilon_i, \varepsilon_j]$ on $N$, preserving the Lie bracket structure
(Lemma 1, \cite{ON1966}).

If we assume that
\begin{equation}
[\varepsilon_i, \varepsilon_j] = F^k_{ij} \varepsilon_k, 
\end{equation}
where, in the sequel, $F^k_{ij} \in C^\infty(N)$ and the Einstein convention is used, then we have
\begin{equation}\label{data}
\begin{aligned}
&[e_i, e_{n+1}] = \kappa_i e_{n+1},  \\
&[e_i, e_j] = f^k_{ij} e_k - 2 \sigma_{ij} e_{n+1}, \quad i, j = 1, 2, \dots, n, 
\end{aligned}
\end{equation}
where $f^k_{ij} = F^k_{ij} \circ \phi$, $\kappa_i$, and  $\sigma_{ij} \in C^\infty(M)$ for all  $i, j = 1, 2, \dots, n$. We will call $\{f^k_{ij}, \kappa_i, \sigma_{ij}\}$ the  integrability data of the adapted frame of the Riemannian submersion $\phi$. It follows from (\ref{data}) that
\begin{equation}
f^k_{ij} = - f^k_{ji}, \quad \sigma_{ij} = - \sigma_{ji}, \quad ( i, j, k = 1, 2, \dots, n). 
\end{equation}

Using the integrability data, the $\lambda$-biharmonic condition can be rewritten as follows.

\begin{proposition}
Let $\phi : (M^{n+1}, g) \to (N^n, h)$ be a Riemannian submersion with the adapted frame $\{e_1, \dots, e_{n+1}\}$ and the integrability data  $\{f^k_{ij}, \kappa_i, \sigma_{ij}\}$.
Then, the Riemannian submersion $\phi$ is $\lambda$-biharmonic if and only if
\begin{equation}\label{eq:lambda}
\begin{aligned}
&-\sum_{i=1}^{n+1} e_i e_i(\kappa_k) + \sum_{i,j=1}^n P_{ii}^j e_j(\kappa_k) + \sum_{i=1}^n \kappa_i e_i(\kappa_k)\\
&\quad  - \sum_{i,j=1}^n \left[ 2 e_i(\kappa_j) P_{ij}^k + \kappa_j (e_i P_{ij}^k) + \kappa_j P_{ij}^l P_{il}^k - \kappa_i \kappa_j P_{ij}^k - \kappa_j P_{ii}^l P_{lj}^k \right]  \\
&\quad  - \operatorname{Ricci}^N(d\phi(\mu), d\phi(e_k)) +\kappa_k \lambda= 0,\quad k = 1, 2, \dots, n,
\end{aligned}
\end{equation}
where 
\[
P^k_{ij} = \frac{1}{2} \left( - f^j_{ik} - f^i_{jk} + f^k_{ij} \right) \text{ for all }  i, j, k = 1, 2, \dots, n,
\]
and 
\[
\mu=(\nabla^M_{e_{n+1}} e_{n+1})^\mathcal{H}.
\]
\end{proposition}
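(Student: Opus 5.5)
The plan is to reduce the statement to the known biharmonic equation for Riemannian submersions with one-dimensional fibers, as in Akyol--Ou \cite{AO2019}. The key point is that the new term $-\lambda\tau(\phi)$ enters linearly. I would proceed in three steps: compute the tension field in terms of the integrability data, quote the expression for the bitension field, and then subtract $\lambda\tau(\phi)$ and read off components.

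\textbf{Step 1: the tension field.} Since $\phi$ is a Riemannian submersion, the horizontal part contributes nothing, because each $e_i$ is the horizontal lift of $\varepsilon_i$. Hence
\[
\tau(\phi)=d\phi\bigl((\nabla^M_{e_{n+1}}e_{n+1})^{\mathcal H}\bigr)=d\phi(\mu).
\]
Using Koszul's formula together with \eqref{data}, I would compute
\[
g(\nabla_{e_{n+1}}e_{n+1},e_k)=-g(e_{n+1},[e_{n+1},e_k])=\kappa_k .
\]
This gives $\mu=\kappa_k e_k$ and therefore $\tau(\phi)=\kappa_k\varepsilon_k$ (pulled back along $\phi$). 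In particular, the $k$-th component of $-\lambda\tau(\phi)$ is $-\lambda\kappa_k$.

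\textbf{Step 2: the bitension field.} Next I would invoke the formula for $\tau_2(\phi)$ in terms of $\{f^k_{ij},\kappa_i,\sigma_{ij}\}$, established in \cite{AO2019} and used in \cite{MS2025}. It states that $\phi$ is biharmonic if and only if the expression in \eqref{eq:lambda} without the term $\kappa_k\lambda$ vanishes, up to an overall sign convention. If I had to rederive it, I would expand Jiang's formula \eqref{Jiang} with $\tau=\kappa_j\varepsilon_j$. For this I need:
\[
\nabla^\phi_{e_i}\varepsilon_j=P^k_{ij}\varepsilon_k\ (i\le n),\qquad \nabla^\phi_{e_{n+1}}\varepsilon_j=0,
\]
where $P^k_{ij}=g(\nabla_{e_i}e_j,e_k)$ are the Christoffel symbols computed by Koszul from the $f^k_{ij}$, and
\[
\nabla^M_{e_i}e_i=\sum_{i\le n}P^j_{ii}e_j+\mu .
\]
Collecting the second derivatives of $\kappa_k$, the cross terms with $P$, and the term $\kappa_i e_i(\kappa_k)$ coming from $\mu$ yields the displayed sum. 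The curvature term gives $-\operatorname{Ricci}^N(d\phi(\mu),d\phi(e_k))$.

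\textbf{Step 3: sign bookkeeping.} The main obstacle is getting the overall sign right. The displayed equation appears to equal $-\langle\tau_2(\phi),\varepsilon_k\rangle$ (note the leading $-e_ie_i(\kappa_k)$). So $\tau_2(\phi)-\lambda\tau(\phi)=0$ becomes
\[
-\langle\tau_2,\varepsilon_k\rangle+\lambda\kappa_k=0,
\]
which matches the term $+\kappa_k\lambda$ in \eqref{eq:lambda}. I would verify this sign by checking the leading Laplacian term against Jiang's formula. Taking components for $k=1,\dots,n$ then gives the equivalence.
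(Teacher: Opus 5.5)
Your approach is the same as the paper's: quote the Akyol--Ou expression for $\tau_2(\phi)$ in terms of the integrability data, then append the linear contribution of $-\lambda\tau(\phi)$.

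There is one sign error, in Step 1. In $\tau(\phi)=\sum_{i=1}^{n+1}\bigl(\nabla^{\phi}_{e_i}d\phi(e_i)-d\phi(\nabla^M_{e_i}e_i)\bigr)$ the vertical index contributes $-d\phi(\nabla^M_{e_{n+1}}e_{n+1})$. Hence $\tau(\phi)=-d\phi(\mu)=-\kappa_k\varepsilon_k$, not $+\kappa_k\varepsilon_k$. This is what the paper computes in the proof of Theorem~\ref{thA}, and it matches the Example, where $\tau(\phi)=-(m-n)\sqrt{-c}\,d\phi(e_n)$.

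With the correct sign, the displayed expression without the $\lambda$-term is $+\langle\tau_2(\phi),\varepsilon_k\rangle$. Its leading term $-\sum e_ie_i(\kappa_k)$ then matches Jiang's formula directly. Also, $-\lambda\tau(\phi)$ contributes $+\lambda\kappa_k$, not $-\lambda\kappa_k$.

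Your final equation is nevertheless right. By Jiang's formula, $\tau_2(\phi)-\lambda\tau(\phi)$ is linear in $\tau(\phi)$, so using $+\kappa_k\varepsilon_k$ throughout only negates every component. That is why your Step 3 identification with $-\langle\tau_2,\varepsilon_k\rangle$ compensates exactly. Still, fix the sign in Step 1 and the corresponding identification in Step 3.
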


\begin{proof}
By the definition of $\lambda$-biharmonic maps and the proof of Theorem~2.1 and Theorem~3.1 in
\cite{AO2019}, we have
\begin{equation*}
\begin{aligned}
\tau_{2,\lambda}(\phi)
&=
\tau_{2}(\phi) - \lambda \tau(\phi)
\\[4pt]
&=
\sum_{i=1}^{n+1}
\Bigl\{
\nabla^{\phi}_{e_{i}} \nabla^{\phi}_{e_{i}} \tau(\phi)
 - \nabla^{\phi}_{\nabla^{M}_{e_{i}} e_{i}} \tau(\phi)
 - R^{N}\bigl( d\phi(e_{i}), \tau(\phi) \bigr)d\phi(e_{i})
\Bigr\}
\\
&\qquad
- \lambda
\sum_{i=1}^{n+1}
\Bigl\{
\nabla^{\phi}_{e_{i}} d\phi(e_{i})
  - d\phi\bigl( \nabla^{M}_{e_{i}} e_{i} \bigr)
\Bigr\}
\\[6pt]
&=
\sum_{k=1}^n(-\sum_{i=1}^{n+1} e_i e_i(\kappa_k) + \sum_{i,j=1}^n P_{ii}^j e_j(\kappa_k) + \sum_{i=1}^n \kappa_i e_i(\kappa_k)\\
&\quad  - \sum_{i,j=1}^n \left[ 2 e_i(\kappa_j) P_{ij}^k + \kappa_j (e_i P_{ij}^k) + \kappa_j P_{ij}^l P_{il}^k - \kappa_i \kappa_j P_{ij}^k - \kappa_j P_{ii}^l P_{lj}^k \right]  \\
&\quad  - \operatorname{Ricci}^N(d\phi(\mu), d\phi(e_k)) d\phi(e_k))+\sum_{k=1}^n \kappa_k \lambda d\phi(e_k)= 0.
\end{aligned}
\end{equation*}
Thus the proposition follows.
\end{proof}

This equation  (\ref{eq:lambda}) is called the \textbf{$\lambda$-biharmonic equation}.

In this paper, we use both $\sigma_{ij}$ and $\sigma_{i,j}$ to denote the same function defined on a manifold. 
The comma is used in expressions like $\sigma_{i,j+1}$ for clarity, especially when indices involve arithmetic operations.

Furthermore, the following proposition holds for general Riemannian submersions, which allows us to simplify the $\lambda$-biharmonic equation mentioned above.

\begin{proposition}
\label{proposition2.2}
Let 
$
\phi : (M^{n+1}(c), g) \longrightarrow (N^n, h)
$
be a $\lambda$-biharmonic Riemannian submersion with an adapted frame 
$
\{e_1, e_2, \dots,e_n, e_{n+1}\}
$
and the integrability data 
$
\{f_{ij}^k, \kappa_i, \sigma_{ij} \} _{i,j,k = 1,\dots,n}.
$
Let $U$ be a nonempty open set on which $\kappa:= (\kappa_1,\dots,\kappa_n)^{\mathsf T}\neq0$. Then there exists
a nonempty open subset $V\subset U$ and an adapted orthonormal frame on
$V$ such that
\[
\kappa_2=\cdots=\kappa_n=0
\]
and
\[
\sigma_{i,i+j}=0 \qquad (i=1,\dots,n-2,\ j\geq2).
\]

\end{proposition}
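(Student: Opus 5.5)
The plan is to get both normalizations from a single change of adapted frame. Write the new horizontal frame as $e'_i=\sum_j (A_{ij}\circ\phi)\,e_j$, where $A=(A_{ij})$ is a smooth $O(n)$-valued function on an open subset of $N$. Such an $A$ turns the orthonormal frame $\{\varepsilon_i\}$ into another orthonormal frame $\{\varepsilon'_i=A_{ij}\varepsilon_j\}$, so $\{e'_1,\dots,e'_n,e_{n+1}\}$ is again adapted. Since $A_{ij}\circ\phi$ is constant along the fibres, $e_{n+1}(A_{ij}\circ\phi)=0$, and from \eqref{data} we get
\[
[e'_i,e_{n+1}]=\sum_j (A_{ij}\circ\phi)\,\kappa_j\,e_{n+1},\qquad
g([e'_i,e'_j],e_{n+1})=\sum_{k,l}(A_{ik}A_{jl})\circ\phi\,\,g([e_k,e_l],e_{n+1}),
\]
because the derivative terms in the second bracket are horizontal. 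Hence $\kappa'=A\kappa$ and $\sigma'=A\sigma A^{\mathsf T}$. So we need an orthogonal matrix, depending only on the base point of $N$, that sends $\kappa$ to a multiple of the first basis vector and makes the skew matrix $\sigma$ tridiagonal. The second requirement is exactly $\sigma_{i,i+j}=0$ for $j\ge2$.

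\emph{Step 1 (straightening $\kappa$).} On $U$ the unit vector $\kappa/|\kappa|$ is smooth. Before it can be used to build $A$, we must show it is constant along the fibres. We will use that $M$ has constant curvature $c$. Compute $g(R^M(e_i,e_{n+1})e_{n+1},e_j)=c\,\delta_{ij}$ and $g(R^M(e_i,e_j)e_{n+1},e_k)=0$ in terms of the integrability data, via the Koszul formula. This gives evolution equations for $e_{n+1}(\kappa_i)$ and $e_{n+1}(\sigma_{ij})$. We expect these equations to show that, on a smaller open set $V$, the derivative $e_{n+1}(\kappa)$ is proportional to $\kappa$ itself. That makes $[\kappa]$ a basic direction field, so it descends to a unit vector field on $\phi(V)$.

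If the curvature identities alone leave a rotational term such as $\sigma\kappa$, we will also use the $\lambda$-biharmonic equation \eqref{eq:lambda} to remove it. We then choose $\varepsilon'_1$ to be this descended unit field and complete it to an orthonormal frame. This gives $\kappa_2=\cdots=\kappa_n=0$. We expect Step 1 to be the main obstacle, since it is the only place where the geometry, rather than linear algebra, is used.

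\emph{Step 2 (tridiagonalizing $\sigma$ while fixing $e_1$).} We now want orthogonal changes that fix $e'_1$, so that Step 1 is not undone, and bring $\sigma$ to tridiagonal form. Run the Lanczos/Krylov procedure on the vectors $e_1,\ \sigma e_1,\ \sigma^2 e_1,\dots$ and apply Gram–Schmidt. Because $\sigma$ is skew, $\sigma$ maps the span of the first $i$ Krylov vectors into the span of the first $i+1$. Hence in the resulting orthonormal basis $\sigma_{i,i+j}=0$ for $j\ge2$.

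To keep this step smooth, shrink $V$ to an open set on which the rank of the Krylov sequence is locally constant; this is possible because the set of points of maximal rank is open. If the sequence stops before reaching dimension $n$, restart on the orthogonal complement with an arbitrary smooth unit vector. The Krylov subspaces are invariant under $\sigma$, so the tridiagonal form is preserved block by block.

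The frame built in Step 2 must also be basic. For this we will again use the fibre-derivative formulas from Step 1. They should show that $e_{n+1}(\sigma)$ is a combination of $\sigma$ and $\kappa$ terms that preserves the Krylov flag generated by $e_1$, so the flag is constant along fibres. If this fails, the fallback is to note that only $\sigma$ at each point enters the normalization. In that case we apply the linear-algebra step fibrewise and verify directly that the resulting frame still consists of horizontal lifts.
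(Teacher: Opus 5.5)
You have the linear algebra right. For a rotation $A\circ\phi$ that is constant along the fibres you correctly get $\kappa'=A\kappa$ and $\sigma'=A\sigma A^{\mathsf T}$. The Lanczos/Krylov construction, restarted on $\sigma$-invariant orthogonal complements, does give $\sigma_{i,i+j}=0$ for $j\ge 2$ with $e_1'$ fixed. The gap is the geometric input that makes such a rotation available: the line $[\kappa]$ and the matrix $\sigma$ must be constant along the fibres, and you prove neither. Step~1 records only an expectation. The fallback in Step~2 is circular, because a rotation chosen fibre by fibre sends horizontal lifts to horizontal lifts only if it is constant along the fibres.

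Moreover, Step~1 cannot come out of the curvature identities you list. The components $R^M(e_i,e_{n+1},e_{n+1},e_j)$ and $R^M(e_i,e_j,e_{n+1},e_k)$ contain only horizontal derivatives of $\kappa$ (compare the system \eqref{eq:R^M}), and constant curvature alone does not make $[\kappa]$ basic. For instance, near $x=0$ in $\mathbb{R}^n\times\mathbb{R}$ take the metric $\sum_i dx_i^2+f^2dt^2$ with $f=1+x_1\cos t+x_2\sin t$. It is flat, $(x,t)\mapsto x$ is a Riemannian submersion with $\sigma\equiv 0$, and $\kappa=-\nabla\log f=-f^{-1}(\cos t,\sin t,0,\dots,0)$ turns along every fibre. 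For this submersion no adapted frame on any open set has $\kappa_2=\cdots=\kappa_n=0$. So the $\lambda$-biharmonic equation must carry the argument, and your sentence deferring to it is where the entire proof lies.

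In the paper this input is Lemma~\ref{lem:fiber-constant}: all integrability data are constant along the fibres, proved under the hypotheses of Theorem~\ref{thA}, which is where the proposition is used. Once the data are basic, the normalization is linear algebra on $N$ of the kind you describe.

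The $\sigma$-half is easy in constant curvature. For example, O'Neill's formula $K^N(\varepsilon_i,\varepsilon_j)\circ\phi=c+3\sigma_{ij}^2$ ($i\neq j$) shows that $\sigma_{ij}^2$, hence by continuity $\sigma_{ij}$, is constant along the fibres. So no flag-preservation argument is needed.

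The $\kappa$-half is the substantial part, and the paper proceeds as follows:
\begin{enumerate}
\item Differentiating $R^M_{a(n+1)cd}=0$ along $e_{n+1}$ gives $\sigma_{ij}\,e_{n+1}(\kappa_k)=0$, so $\kappa$ is basic wherever $\sigma\neq0$.
\item On an open set where $\sigma\equiv 0$ and $e_{n+1}(\kappa_a)\neq 0$, one substitutes $e_a(\kappa_c)=\delta_{ac}c+P^l_{ac}\kappa_l+\kappa_a\kappa_c$ into \eqref{eq:lambda} to obtain the fibre equation \eqref{eq:eek}.
\item One differentiates the curvature identities twice along $e_{n+1}$ to reach \eqref{eq:eek^2}.
\item This system is then ruled out case by case: $c=0$; $c\neq 0$ with $n\ge 3$; and $n=2$.
\end{enumerate}
Without an argument of this kind, your proposal does not establish the proposition.
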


\begin{proof} 
The proof is the same as that of Lemma 3.2 in our previous paper \cite{MS2025}.
\end{proof}


\section{Proof of the Main Theorem}
In this section, we prove the main theorem concerning $\lambda$-biharmonic Riemannian submersions from manifolds with constant sectional curvature.

By using the reduction provided by Proposition \ref{proposition2.2} and several key lemmas, we show that, except for the critical case, any $\lambda$-biharmonic Riemannian submersion must be harmonic.
\begin{lemma}\label{lem:fiber-constant}
Let $\phi:(M^{n+1}(c),g)\to (N^n,h)$ be a $\lambda$-biharmonic Riemannian submersion.
Assume that one of the following holds.
\begin{enumerate}
\item $c\ge 0$, 
\item $c<0$ and $\lambda\neq 2(n-1)c$.
\end{enumerate}
Then, for any orthonormal frame 
 $\{e_1, \dots, e_n, e_{n+1}\}$ on $M^{n+1}(c)$ adapted to the Riemannian submersion $\phi$, 
 with $e_{n+1}$ being a vertical vector field,
  all integrability data $f^k_{ij}$, $\kappa_i$, and $\sigma_{ij}$ are constant along the fibers of $\phi$, 
  that is, the following equation holds for all $i,j,k = 1, 2, \dots, n$:
\[
e_{n+1}(f^k_{ij}) = e_{n+1}(\kappa_i) = e_{n+1}(\sigma_{ij}) = 0.
\]
\end{lemma}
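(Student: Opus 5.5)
The plan is to treat the three families of integrability data separately. The easy family is $f^k_{ij}$. Since $f^k_{ij}=F^k_{ij}\circ\phi$ and $e_{n+1}$ is vertical, we get $e_{n+1}(f^k_{ij})=d\phi(e_{n+1})(F^k_{ij})=0$ immediately. Everything therefore reduces to showing $e_{n+1}(\kappa_i)=0$ and $e_{n+1}(\sigma_{ij})=0$.

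\textbf{Structure equations for $\sigma_{ij}$.} I would first write the structure equations of the adapted frame in constant curvature $c$. From \eqref{data} and the Koszul formula, compute the Levi-Civita connection in terms of $\{f^k_{ij},\kappa_i,\sigma_{ij}\}$. In particular,
\[
\nabla_{e_{n+1}}e_{n+1}=-\kappa_i e_i,\qquad \nabla_{e_i}e_j=P^k_{ij}e_k-\sigma_{ij}e_{n+1},
\]
so that $\mu=-\sum_i\kappa_ie_i$. Two ingredients then follow.
\begin{enumerate}
\item The Jacobi identity for $e_i,e_j,e_{n+1}$. It gives $e_{n+1}(\sigma_{ij})$ as a combination of $e_i(\kappa_j)-e_j(\kappa_i)$, $f^k_{ij}\kappa_k$ and $\kappa\sigma$-terms.
\item The vanishing of the mixed curvature components $R^M(e_i,e_j,e_k,e_{n+1})=0$. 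Together with $R^M(e_i,e_{n+1},e_{n+1},e_j)=c\,\delta_{ij}$, these express $e_{n+1}(\sigma_{ij})$ purely algebraically in terms of $\kappa$ and $\sigma$ (O'Neill's $A$-tensor is $\sigma$, and the $T$-tensor vanishes since fibers are geodesic up to $\kappa$).
\end{enumerate}
Comparing the two expressions should yield $e_{n+1}(\sigma_{ij})=0$ once $e_{n+1}(\kappa)$ is controlled. It should also yield formulas for $e_i(\kappa_j)$, essentially $e_i(\kappa_j)+\kappa_i\kappa_j$ plus $\sigma^2$ terms and $\delta_{ij}c$.

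\textbf{Reducing the $\lambda$-biharmonic equation.} Next I would simplify \eqref{eq:lambda}. By O'Neill's formula, $\operatorname{Ricci}^N$ of horizontal vectors equals $(n-1)c\,\delta$ plus $3\sum_l\sigma_{il}\sigma_{jl}$. Substituting this together with the curvature identities above, all second derivatives $e_ie_i(\kappa_k)$ for horizontal $i$ get replaced by lower-order terms. The equation should become a linear relation of the form
\[
-e_{n+1}e_{n+1}(\kappa_k)+\bigl(\lambda-2(n-1)c\bigr)\kappa_k+(\text{terms in }\kappa,\sigma)=0 .
\]
Here the critical constant $2(n-1)c$ comes from combining the $-(n-1)c$ in the Ricci term with the $-(n-1)c$ arising from the horizontal Laplacian of $\kappa$. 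To organize this computation I would work on the open set $U$ where $\kappa\neq0$ and use the frame of Proposition~\ref{proposition2.2} ($\kappa_2=\cdots=\kappa_n=0$, most $\sigma$'s zero). On the complement of $\overline U$ we have $\kappa\equiv0$, and the structure equations directly give $e_{n+1}(\sigma_{ij})=0$ there; continuity handles the boundary.

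\textbf{Main obstacle: showing $e_{n+1}(\kappa_1)=0$.} Differentiating the curvature identity $R^M(e_1,e_{n+1},e_{n+1},e_1)=c$ along the fiber gives an ODE along each fiber for $\kappa_1$, roughly $e_{n+1}e_{n+1}(\kappa_1)$ expressed through $\kappa_1^3$, $c\kappa_1$ and $\sigma$-terms. Combining this with the reduced $\lambda$-equation eliminates the second fiber derivative. I expect to obtain $(\lambda-2(n-1)c)\,e_{n+1}(\kappa_1)$ equal to an expression that is itself a multiple of $e_{n+1}(\kappa_1)$ with a sign-definite coefficient when $c\ge0$. The delicate part is this algebraic elimination, and in particular verifying that the coefficient can vanish only in the excluded case $c<0$, $\lambda=2(n-1)c$. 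This is precisely where the dichotomy in the hypotheses must enter. Once $e_{n+1}(\kappa_1)=0$ is established, the relation $e_{n+1}(\sigma_{ij})=0$ follows from the first step. Finally, constancy along fibers is frame-independent for adapted frames (they differ by basic rotations), so the conclusion holds for any adapted frame.
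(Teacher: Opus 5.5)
Your handling of $f^k_{ij}$ is correct, but the heart of the lemma, $e_{n+1}(\kappa_i)=0$, is left as an expectation (``I expect to obtain \dots\ a sign-definite coefficient''), and your route misses the step that makes the problem tractable. You also have the order reversed. In constant curvature, $e_{n+1}(\sigma_{ij})=0$ comes first and does not depend on $\kappa$; the paper imports it as a general fact for submersions from $M^{n+1}(c)$. O'Neill's horizontal sectional curvature identity gives $c=K^N(\varepsilon_a,\varepsilon_b)\circ\phi-3\sigma_{ab}^2$. Hence $\sigma_{ab}^2$ is basic, so $\sigma_{ab}e_{n+1}(\sigma_{ab})=0$, and by density $e_{n+1}(\sigma_{ab})=0$.

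Your Step~2 cannot deliver this. The components $R^M(e_i,e_j,e_k,e_{n+1})$ contain only horizontal derivatives $e_k(\sigma_{ij})$. The derivative $e_{n+1}(\sigma_{ij})$ enters only through $R^M(e_i,e_{n+1},e_{n+1},e_j)=c\,\delta_{ij}$, and the skew part of that identity is exactly your Jacobi relation $2e_{n+1}(\sigma_{ij})=e_i(\kappa_j)-e_j(\kappa_i)-f^k_{ij}\kappa_k$. Comparing the two therefore yields nothing. Even with $e_{n+1}(\kappa)=0$ known, this relation only makes $e_{n+1}(\sigma_{ij})$ basic, not zero.

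The missing idea is what the paper does once $e_{n+1}(\sigma_{ij})=e_{n+1}(P^k_{ij})=0$ is in hand. It differentiates the Codazzi-type identity $R^M(e_a,e_{n+1},e_c,e_d)=0$ along the fiber and adds two relabelings to get $\sigma_{ij}\,e_{n+1}(\kappa_k)=0$. Consequently, on any open set where some $e_{n+1}(\kappa_a)\neq0$, all $\sigma_{ij}$ vanish. Then:
\begin{itemize}
\item $e_a(\kappa_c)=\delta_{ac}c+P^l_{ac}\kappa_l+\kappa_a\kappa_c$ fixes every horizontal derivative of $\kappa$;
\item the $\lambda$-equation becomes the fiber ODE $e_{n+1}e_{n+1}(\kappa_a)=-\kappa_a(K-\lambda)$, where $K=\sum_i\kappa_i^2+(2n-1)c$;
\item differentiating the curvature identities twice along the fiber closes into the algebraic system $3\{e_{n+1}(\kappa_a)\}^2=\kappa_a^2(L-3\lambda)-(K-\lambda)c$ and $3e_{n+1}(\kappa_a)e_{n+1}(\kappa_b)=\kappa_a\kappa_b(L-3\lambda)$, with $L=\sum_i\kappa_i^2+(6n-5)c$.
\end{itemize}

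Without removing $\sigma$ first, nothing in your Step~3 forces the factorization you hope for, and the mechanism you predict is not what happens. The reduced fiber equation carries $(2n-1)c$, not $2(n-1)c$. The critical value appears only when $K=\lambda$ turns the system into $3\{e_{n+1}(\kappa_a)\}^2=2\kappa_a^2(2(n-1)c-\lambda)$, which must then be differentiated once more. Even after that, the contradiction needs three separate arguments:
\begin{itemize}
\item the case $c=0$;
\item the case $c\neq0$ with $n\ge3$, where three indices are played against each other to force $\kappa_1^2=\dots=\kappa_n^2$, giving a sign contradiction with $L-3\lambda<0$;
\item the case $n=2$, where horizontal information is indispensable: $Q=\kappa_1^2+\kappa_2^2$ is constant by a quadratic relation, and $e_1(Q)=2\kappa_1(Q+c)$ forces $Q=-c$.
\end{itemize}

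You also cannot use the frame of Proposition~\ref{proposition2.2} inside this proof. An adapted (basic) frame with $\kappa_2=\dots=\kappa_n=0$ exists only if the direction of $\kappa$ is constant along fibers, which is part of what the lemma asserts. A pointwise rotation aligning $e_1$ with $\kappa$ destroys basicness, and with it the structure equations and \eqref{eq:lambda}.

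Two smaller points. With $[e_i,e_{n+1}]=\kappa_ie_{n+1}$ one gets $\nabla_{e_{n+1}}e_{n+1}=+\kappa_ie_i$, not $-\kappa_ie_i$. And O'Neill's $T$-tensor is encoded precisely by $\kappa$; it does not vanish.
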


\begin{proof} 
We work locally on a sufficiently small open subset $U \subset M^{n+1}(c)$ on which an adapted orthonormal frame
$\{e_1,\dots,e_n,e_{n+1}\}$ is defined, and we suppress the restriction to $U$ from the notation.

We first show that $e_{n+1}(f^k_{ij}) = e_{n+1}(\sigma_{ij}) = 0$.
This part holds for an arbitrary Riemannian submersion and the argument is the same as in our previous work \cite{MS2025}.

Next, we show  $e_{n+1}(\kappa_i) = 0$. 
From the definition of the integrability data and the Koszul formula, we obtain 

\begin{equation}\label{connection}
\left\{ \,
\begin{aligned}
& \nabla^M_{e_i} e_j = P^k_{ij} e_k - \sigma_{ij} e_{n+1} \quad  \text{for any }  i,j,k = 1, 2, \dots, n,\\
& \nabla^M_{e_{n+1}} e_{n+1} = \sum_{i=1}^n \kappa_i e_i,\\
& \nabla^M_ { e _ { i } } e _ { n + 1 } = \sigma _ { i j } e _ { j }\quad \text{for any }  i = 1, 2, \dots, n,\\
& \nabla^M_ { e _ { n+1 } } e _ { i } =\sigma _ { i j } e _ { j } -\kappa_i e_{n+1}\quad  \text{for any }  i = 1, 2, \dots, n
\end{aligned}
\right.
\end{equation}
on  $U$.
Using  (\ref{connection}) and the fact that $M^{n+1}(c)$ has constant curvature, we get 
\begin{equation}
\left\{ \,
\begin{aligned}
&- R^M _ { a (n + 1)c d} = e _ { a } ( \sigma _ { c d } ) +P_ { a  l } ^ { d } \sigma _ { c l}  - P_ { a c } ^ { l} \sigma _ { ld }- \kappa _ { c } \sigma _ { a d }  + \kappa _ { d } \sigma _ { a c } - \kappa _ { a } \sigma _ { cd } = 0, \\
&- R^M _ { a b a b } = e _ { a } ( P_ { b a } ^ { b } ) + P_ { b a } ^ { l} P_ { a l } ^ { b } + 3 \sigma _ { a b } ^ { 2 } - e _ { b } ( P _ { a a } ^ { b } ) -P_{aa}^l P_{bl}^b- f_ { a b } ^ { l } P_ { la } ^ { b }=-c, \\
&- R^M _ { a (n + 1 ) a (n + 1 ) } = - \sigma _ { a l } ^ { 2 } - e _ { a } ( \kappa _ { a } ) + P_ { a a } ^ { l } \kappa _ { l } + \kappa _ { a } ^ { 2 } = - c, \\
&- R^M _ { a (n + 1 )c (n + 1) } = - \sigma _ { c l } \sigma _ { a l } - e _ { a  } (\kappa _ { c } )+ P_ { a c } ^l\kappa_l+ \kappa _ { a }\kappa_c + e _ { n + 1 } ( \sigma _ { a c } ) = 0\quad (a\neq c) \\ 
\end{aligned}
\label{eq:R^M}
\right.
\end{equation}
on  $U$.
By differentiating both sides of the first equation in (\ref{eq:R^M}) by $e_{n+1}$, and substituting 

\begin{align*}
e_{ n + 1 }  e_a = -[e_a, e_{n+1}] +e_a e_{ n + 1 } =-\kappa_a e_{ n + 1 }  + e_a e_{ n + 1 },
\end{align*}
we obtain
\begin{align*}
- \sigma _ { a d } e _ { n + 1 } ( \kappa _ { c } ) + \sigma _ { a c } e _ { n + 1 } ( \kappa _ { d } ) - \sigma _ { c d } e _ { n + 1 } ( \kappa _ { a } ) = 0 \quad \text{for any }a,c,d= 1, 2, \dots, n 
\end{align*}
on  $U$.

First, we relabel the indices by replacing $ a $ with $ i $, $ c $ with $ j $, and $ d $ with $ k $. Under this assignment, the following identity holds.
\begin{equation}\label{eq:ijk}
\sigma_{ij} e_{n+1}(\kappa_k) = \sigma_{ik} e_{n+1}(\kappa_j) + \sigma_{jk} e_{n+1}(\kappa_i)
\end{equation}
on  $U$. 
Next, we relabel the indices by replacing $a$ with $j$, $c$ with $k$, and $d$ with $i$.
Since $\sigma = (\sigma_{ij})$ is skew-symmetric, the following identity holds.
\begin{equation}\label{eq:jki}
\sigma_{ij} e_{n+1}(\kappa_k) = -\sigma_{ik} e_{n+1}(\kappa_j) - \sigma_{jk} e_{n+1}(\kappa_i)
\end{equation}
on  $U$. 
By combining equations (\ref{eq:ijk}) with (\ref{eq:jki}), we obtain
\begin{equation}\label{sigma_kappa}
\sigma_{ij} e_{n+1}(\kappa_k) = 0
\end{equation}
for any $ i,j,k = 1, 2, \dots, n$
on  $U$.

To prove that $ e_{n+1}(\kappa_i) = 0 $ for all $ i $, we argue by contradiction.
Assume that this does not hold, meaning that we can choose an index $ a $ and a point $ p \in U $ such that
\[
e_{n+1}(\kappa_a)(p) \neq 0.
\]
Then, by continuity, there exists a nonempty open neighborhood $ \Omega_1 \subset U $ such that
\[
e_{n+1}(\kappa_a) \neq 0 \quad \text{at any point of } \Omega_1.
\]

If $\kappa_a=0$ on $\Omega_1$, then we have a contradiction.
Thus, we may assume that there exists a nonempty open set 
$\Omega_2 \subset \Omega_1$ such that 
$\kappa_a\neq0$ at any point of $\Omega_2$.

On this $\Omega_2$, 
\eqref{sigma_kappa} implies 
\[
\sigma_{ij} = 0.
\]
 Under this condition, equation  (\ref{eq:R^M}) becomes
\begin{equation}
\left\{
\begin{aligned}
&e _ { a } ( P_ { b a } ^ { b } ) - e _ { b } ( P_ { a a } ^ { b } ) = - c + P_ { a a } ^ { l } P _ { b l } ^ { b } + f _ { a b } ^ { l } P _ { la } ^ { b } - P_ { b a } ^ { l} P_ { a l } ^ { b },\\
&e _ { a } ( \kappa _ { c } ) = \delta _ { a c } c + P_ { a c } ^ { l } \kappa _ { l } + \kappa _ { a } \kappa_ { c }\\
\end{aligned}
\label{R^Msigma=0}
\right.
\end{equation}
on $\Omega_2$.
By substituting the second equation of (\ref{R^Msigma=0}) into the $\lambda$-biharmonic equation (\ref{eq:lambda})  and simplifying
 (the reduction follows from the same computation as in our previous paper \cite{MS2025}), 
 we obtain

\begin{equation}\label{eq:eek}
e _ { n+1 } e _ { n+1 } ( \kappa _a ) = - \kappa _ { a } ( \sum_{i=1}^n \kappa _ { i } ^ { 2 } + (2n-1) c-\lambda )
\end{equation}
on $\Omega_2$.
To derive additional usable equations, we differentiate both sides of the second and third equations in  (\ref{eq:R^M}) by $e_{ n+1}e_{ n+1 }$.
Then, by substituting $\sigma_{ij} = 0$ and $e_{ n + 1 }  e_a = -\kappa_a e_{ n + 1 }  + e_a e_{ n + 1 } $ we obtain

\begin{equation}
\left\{ \,
\begin{aligned}
&- 3 \{ e _ { n + 1 } ( \kappa _ { a } ) \} ^ { 2 } - 4 \kappa _a e _ { n + 1 } e _ { n + 1 } ( \kappa _a ) - P_ { a a } ^ { l } e _ { n + 1 } e _ { n + 1 }( \kappa _l ) + e _ { a } e _ { n + 1 } e _ { n + 1 } ( \kappa _a ) = 0, \\
&- 3e _ { n + 1 } ( \kappa_a ) e _ { n + 1 } ( \kappa _ { c} ) - 3 \kappa _a e _ { n + 1 } e _ { n + 1 } ( \kappa _ { c} ) - \kappa _c e _ { n + 1 } e _ { n + 1 } ( \kappa _ { a} )\\
&\qquad \qquad\qquad\qquad\qquad -P_{ac}^le _ { n + 1 } e _ { n + 1 }( \kappa _ { l} ) + e _ { a } e _ { n + 1 } e _ { n + 1 } ( \kappa _ { c } ) = 0\\
\end{aligned}
\label{iroiro}
\right.
\end{equation}
on $\Omega_2$.
By substituting (\ref{R^Msigma=0}) and (\ref{eq:eek}) into (\ref{iroiro}), we obtain the following equation:

\begin{equation}
\left\{ \,
\begin{aligned}
&- 3 \{ e _ { n+1 } ( \kappa _ { a } ) \} ^ { 2 } + \kappa _ { a } ^ { 2 } (L-3\lambda) - (K-\lambda) c=0, \\
&- 3 \{ e _ { n+1 } ( \kappa _ { b } ) \} ^ { 2 } + \kappa _ { b } ^ { 2 } (L-3\lambda) - (K-\lambda) c=0, \\
&3e _ { n+1 } ( \kappa _ { a } ) e _ { n+1 } ( \kappa _ { b } ) = \kappa _ { a } \kappa _ { b } (L-3\lambda)
\end{aligned}
\label{eq:eek^2}
\right.
\end{equation} 
on $\Omega_2$,
where
\[
L=  \sum_{i=1}^n \kappa _ { i } ^ { 2 } + (6n-5) c,~~K=\sum_{i=1}^n \kappa _ { i } ^ { 2 } + (2n-1) c.
\]

\textbf{Case I:}  $c= 0$.
Since 
$$
L = K = \sum_{i=1}^n \kappa _ { i } ^ { 2 } ,
$$
 equations \eqref{eq:eek} and \eqref{eq:eek^2} imply that 

\begin{equation}
\left\{ \,
\begin{aligned}
&e _ { n+1 } e _ { n+1 } ( \kappa _a ) = - \kappa _ { a } (   \sum_{i=1}^n \kappa _ { i } ^ { 2 }-\lambda ),\\
& 3 \{ e _ { n+1 } ( \kappa _ { a } ) \} ^ { 2 } = \kappa _ { a } ^ { 2 } ( \sum_{i=1}^n \kappa _ { i } ^ { 2 }-3\lambda) , \\
&3e _ { n+1 } ( \kappa _ { a } ) e _ { n+1 } ( \kappa _ { b } ) = \kappa _ { a } \kappa _ { b } ( \sum_{i=1}^n \kappa _ { i } ^ { 2 }-3\lambda)
\end{aligned}
\label{iroiroc=0}
\right.
\end{equation} 
on $\Omega_2$.
Using the second and third equations of \eqref{iroiroc=0} and assuming 
$e_{n+1}(\kappa_{a})\neq 0$ at any point of $\Omega_2$, we obtain
\begin{equation}\label{e1e2}
e_{n+1}(\kappa_{a})
= \frac{\kappa_{a}^{2}\left(\displaystyle\sum_{i=1}^n \kappa_i^{2} - 3\lambda\right)}
       {3\, e_{n+1}(\kappa_{a})},
\qquad
e_{n+1}(\kappa_{b})
= \frac{\kappa_{a}\kappa_{b}\left(\displaystyle\sum_{i=1}^n \kappa_i^{2} - 3\lambda\right)}
       {3\, e_{n+1}(\kappa_{a})}
\end{equation}
on $\Omega_2$.
Differentiating both sides of the second equation in \eqref{iroiroc=0} by $e _ { n + 1 }$, we obtain

\[
3\, e_{n+1}(\kappa_{a})\, e_{n+1}e_{n+1}(\kappa_{a})
=
\kappa_{a} e_{n+1}(\kappa_{a})(\sum_{i=1}^n \kappa _ { i } ^ { 2 } - 3\lambda)
+ \sum_{i=1}^n\kappa_{a}^{2}\kappa_{i} e_{n+1}(\kappa_{i})
\]
on $\Omega_2$.
Substituting the first equation of \eqref{iroiroc=0}, we obtain
\[
-3 \kappa_{a} e_{n+1}(\kappa_{a})(\sum_{i=1}^n \kappa _ { i } ^ { 2 }- \lambda)
=
\kappa_{a} e_{n+1}(\kappa_{a})(\sum_{i=1}^n \kappa _ { i } ^ { 2 } - 3\lambda)
+ \sum_{i=1}^n\kappa_{a}^{2}\kappa_{i} e_{n+1}(\kappa_{i})
\]
on $\Omega_2$.
Rearranging terms gives
\[
\kappa_{a} e_{n+1}(\kappa_{a})(4\sum_{i=1}^n \kappa _ { i } ^ { 2 } -6\lambda)
+
\sum_{i=1}^n\kappa_{a}^{2}\kappa_{i} e_{n+1}(\kappa_{i})
= 0
\]
on $\Omega_2$.
Substituting \eqref{e1e2} into this identity yields

$$
\kappa_{a}^{3}\left(\displaystyle\sum_{i=1}^n \kappa_i^{2} - 3\lambda\right)\left(\displaystyle 5\sum_{i=1}^n \kappa_i^{2} - 6\lambda\right)
      =0
$$
on $\Omega_2$.
Since $\kappa_a \neq 0$ at any point of $\Omega_2$, we have
\[
\left(\sum_{i=1}^{n}\kappa_i^{2} - 3\lambda\right)
\left(5\sum_{i=1}^{n}\kappa_i^{2} - 6\lambda\right)
= 0.
\]
Assume that there exists a nonempty open set $\Lambda_1 \subset \Omega_2$ such that

\[
5\sum_{i=1}^{n}\kappa_i^{2} - 6\lambda \neq 0
\]
on $\Lambda_1$.
Then we have
\[
\sum_{i=1}^{n}\kappa_i^{2} - 3\lambda = 0
\]
on $\Lambda_{1}$.
However, the second equation in \eqref{iroiroc=0} then implies 
$e_{n+1}(\kappa_{a}) = 0$ on $\Lambda_1$, which is a contradiction.
Hence we must have
\[
5\sum_{i=1}^{n}\kappa_i^{2} - 6\lambda  =0
\]
on $\Omega_2$.
This implies
\[
\lambda = \frac{5}{6}\sum_{i=1}^{n}\kappa_i^{2} .
\]
Substituting this into the second equation of \eqref{iroiroc=0},
\[
3 \{ e _ { n+1 } ( \kappa _ { a } ) \} ^ { 2 } = \kappa _ { a } ^ { 2 } ( \sum_{i=1}^n \kappa _ { i } ^ { 2 }-3\lambda),
\]
we obtain
\[
3\{e_{n+1}(\kappa_{a})\}^{2}
= -\frac{3}{2}\kappa_{a}^{2}\sum_{i=1}^n \kappa _ { i } ^ { 2 }
\]
on $\Omega_2$, which is a contradiction.
This contradiction shows that such an open set $\Omega_2$ cannot exist.
Therefore, we must have $\kappa_a = 0$ on $\Omega_1$.
However, this implies that $e_{n+1}(\kappa_a) = 0$ on $\Omega_1$, 
which contradicts the assumption that
$e_{n+1}(\kappa_a) \not=0$ on $\Omega_1$.
Hence, such an open set $\Omega_1$ cannot exist. 
We conclude that
\[
e_{n+1}(\kappa_i) = 0
\]
on $U$ for all $i$.

\textbf{Case II:} $c\not=0$.
Taking the difference between the first and second equations in (\ref{eq:eek^2}), we obtain
\begin{equation}
\left\{ \,
\begin{aligned}
&3 ( \{ e _ { n+1 } ( \kappa _ { a } ) \} ^ { 2 } - \{ e _ { n+1 } ( \kappa _ { b } ) \} ^ { 2 } ) = ( \kappa _ { a } ^ { 2 } - \kappa _ { b } ^ { 2 } )(L-3\lambda), \\
&3e _ { n+1 } ( \kappa _ { a } ) e _ { n+1 } ( \kappa _ { b } ) = \kappa _ { a } \kappa _ { b } (L-3\lambda)
\end{aligned}
\label{eq:eek^22}
\right.
\end{equation}
on $\Omega_2$.
Since $e_{n+1}(\kappa_a) \neq 0$ at any point of $\Omega_2$, 
we solve the second equation of (\ref{eq:eek^22}) for $e_{n+1}(\kappa_b)$, 
and substitute it into the first equation of (\ref{eq:eek^22}), which yields 
\[
\{ 3\{e _ { n+1 } ( \kappa _ { a } ) \}^2-\kappa _ { a} ^ { 2 } (L-3\lambda ) \} \{ 3\{e _ { n+1 } ( \kappa _ { a } )\}^2 +\kappa _ { b } ^ { 2 }  (L-3\lambda ) \} = 0
\]
on $\Omega_2$.
Assume that there exists a nonempty open set $\Omega_3 \subset \Omega_2$ such that
\[
3\{e _ { n+1 } ( \kappa _ { a } )\}^2 +\kappa _ { b } ^ { 2 }  (L-3\lambda )
\not=0
\]
on $\Omega_3$.
Then we have
\[
3\{e _ { n+1 } ( \kappa _ { a } ) \}^2-\kappa _ { a} ^ { 2 } (L-3\lambda ) =0
\]
on $\Omega_3$. 
Substituting this into the first equation of \eqref{eq:eek^2}, we obtain  
\[
(K-\lambda)c  =0
\]
on $\Omega_3$.
Since $c \neq 0$, we have
 \[
K=\lambda
 \]
on $\Omega_3$.
Substituting this into (\ref{eq:eek}) and the first equation of (\ref{eq:eek^2}), we obtain
\[
e _ { n+1 } e _ { n+1 } ( \kappa _a ) = 0
\]
on $\Omega_3$, and
\begin{align*}
3 \{ e _ { n+1 } ( \kappa _ { a } ) \} ^ { 2 }   &= \kappa _ { a } ^ { 2 } (L-3\lambda)  \\
 &=  \kappa _ { a } ^ { 2 } (K+(4n-4)c-3\lambda)\\
 &=  \kappa _ { a } ^ { 2 } (4(n-1)c-2\lambda) \\
 &= 2 \kappa _ { a } ^ { 2 } (2(n-1)c-\lambda) 
\end{align*}
on $\Omega_3$.

If $\lambda=2(n-1)c$, then the above identity implies that 
$$
 e _ { n+1 } ( \kappa _ { a } ) =0
 $$
 on $\Omega_3$. This contradicts our assumption.
 
Assume now that $\lambda \neq 2(n-1)c$.
Differentiating both sides of 
$$3 \{ e _ { n+1 } ( \kappa _ { a } ) \} ^ { 2 }   = 2 \kappa _ { a } ^ { 2 } (2(n-1)c-\lambda) $$
 by $e_{n+1}$ and 
combining the resulting equation with $e _ { n+1 } e _ { n+1 } ( \kappa _a ) = 0$, we obtain 
\[
\kappa_ae _ { n+1 } ( \kappa _ { a })(\lambda-2(n-1)c) =0
\]
on $\Omega_3$.
Since $\lambda \not= 2(n-1)c$, we have
\[
\kappa_ae _ { n+1 } ( \kappa _ { a })=0
\]
on $\Omega_3$.
Since $\kappa_a \neq 0$ at any point of $\Omega_3 \subset \Omega_2$ and 
$e_{n+1}(\kappa_a) \neq 0$ at any point of $\Omega_3 \subset \Omega_1$,
it follows that $\kappa_a e_{n+1}(\kappa_a) \neq 0$ on $\Omega_3$,
which contradicts the above equality.
Therefore, when $c \not=0$ we must have
\begin{equation}\label{cnot01}
3\{e _ { n+1 } ( \kappa _ { a } )\}^2 +\kappa _ { b } ^ { 2 }  (L-3\lambda )
=0
\end{equation}
on $\Omega_2$.
Since different arguments are needed depending on the dimension,
we distinguish two cases.

First, we consider the case $n \geq 3$.

Moreover, since $b(\not=a)$  is arbitrary, it follows that 
\begin{equation}\label{case2ac}
3\{e _ { n+1 } ( \kappa _ { a } )\}^2 +\kappa _ { c } ^ { 2 }  (L-3\lambda )
=0
\end{equation}
on $\Omega_2$.
Solving \eqref{cnot01} and  \eqref{case2ac} simultaneously, we obtain 
\[
\kappa_b^2=\kappa_c^2
\]
on $\Omega_2$.
From (\ref{cnot01}), we have
\[
3\{e _ { n+1 } ( \kappa _ { a } )\}^2 =-\kappa _ { b } ^ { 2 }  (L-3\lambda )
\]
on $\Omega_2$.
Since $e_{n+1}(\kappa_a) \neq 0$ at any point of $\Omega_2$, 
the left-hand side is strictly positive.
Hence, we obtain
$$
-\kappa _ { b } ^ { 2 }  (L-3\lambda )>0
$$
on $\Omega_2$.
Since $\kappa_b^2 > 0$, it follows that
$$
L-3\lambda < 0
$$
on $\Omega_2$.
Squaring the third equation of \eqref{eq:eek^2}
\[
9\{  e_{n+1}(\kappa_a)\}^2 \{ e_{n+1}(\kappa_b)\}^2
=
\kappa_a^2 \kappa_b^2 (L - 3\lambda)^2,
\]
 and substituting (\ref{cnot01}), we obtain
 \[
-\kappa_b^{2}(L-3\lambda)
3\{e_{n+1}(\kappa_b)\}^2
= \kappa_a^{2}\kappa_b^{2}(L-3\lambda)^{2}
\]
on $\Omega_2$.
Simplifying, we obtain
\[
\kappa_b^{2}(L-3\lambda)
\left(
3\{e_{n+1}(\kappa_b)\}^2
+ \kappa_a^{2}(L-3\lambda)
\right)=0
\]
on $\Omega_2$.
Assume that there exists a nonempty open set $\Lambda_2 \subset \Omega_2$ such that
\[
3\{e_{n+1}(\kappa_b)\}^2+\kappa_a^2(L-3\lambda)\not=0
\]
on $\Lambda_2$.
Then we have
\[
\kappa_b^2(L-3\lambda)=0
\]
 on $\Lambda_2$. Substituting this into (\ref{cnot01}), we obtain $e_{n+1}(\kappa_a)=0$ on $\Lambda_2 \subset \Omega_1$, which leads to a contradiction.
 Therefore,
\begin{equation}\label{case2ba}
3\{e_{n+1}(\kappa_b)\}^2+\kappa_a^2(L-3\lambda)=0
\end{equation}
on $\Omega_2$. 
Since  $L-3\lambda < 0$ and $\kappa_a\not=0$ at any point of $\Omega_2$,
it follows that
\[
e_{n+1}(\kappa_b) \not=0
\]
on $\Omega_2$.
Hence, by the same argument as in the previous discussion for $e_{n+1}(\kappa_a)$,
we obtain
\[
\{ 3\{e _ { n+1 } ( \kappa _ { b } ) \}^2-\kappa _ { b} ^ { 2 } (L-3\lambda) \} 
\{ 3\{e _ { n+1 } ( \kappa _ { b } )\}^2 +\kappa _ { c } ^ { 2 }  (L-3\lambda) \} = 0
\]
on $\Omega_2 $ for $c\not=b$.
Assume that there exists a nonempty open set $\Lambda_3 \subset \Omega_2$ such that
$$
 3\{e _ { n+1 } ( \kappa _ { b } )\}^2 +\kappa _ { c } ^ { 2 }  (L-3\lambda) \not=0.
$$
on $\Lambda_3$.
Then, by the same argument as before, we arrive at a contradiction.
Therefore, 
\[
3\{e _ { n+1 } ( \kappa _ { b } )\}^2 =-\kappa _ { c } ^ { 2 } (L-3\lambda)=-\kappa _ { b } ^ { 2 } (L-3\lambda)
\]
on $\Omega_2$.
Combining this with \eqref{case2ba}, we obtain 
\[
\kappa_a^2=\kappa_b^2
\]
on $\Omega_2$.
Differentiating both sides of this equation by $e_{n+1}$, we obtain 
\[
\kappa_ae_{n+1}(\kappa_a)=
\kappa_be_{n+1}(\kappa_b)
\]
on $\Omega_2$.
Moreover, we have 
\[
\kappa_1^2=\kappa_2^2=\cdots=\kappa_n^2\not=0
\]
on $\Omega_2$.
Then, by multiplying both sides of the third equation in \eqref{eq:eek^2} by $\kappa_a\kappa_b$, we obtain
\[
3\kappa_a e_{n+1}(\kappa_a)\kappa_be_{n+1}(\kappa_b)=\kappa_a^2\kappa_b^2(L-3\lambda)
\]
on $\Omega_2$.
Therefore, we obtain 
\[
\frac{3}{4}
\{e_{n+1}(\kappa_a^2)\}^2=\kappa_a^4(L-3\lambda)
\]
on $\Omega_2$.
Since $L-3\lambda<0$, the right-hand side is negative, whereas the left-hand side is nonnegative. 
This contradiction shows that such an open set $\Omega_2$ cannot exist.
Therefore, we must have $\kappa_a = 0$ on $\Omega_1$.
However, this implies that $e_{n+1}(\kappa_a) = 0$ on $\Omega_1$, 
which contradicts the assumption that
$e_{n+1}(\kappa_a) \not=0$ on $\Omega_1$.
Hence, such an open set $\Omega_1$ cannot exist. 
We conclude that
\[
e_{n+1}(\kappa_i) = 0
\]
on $U$ for all $i$.

Next, we consider the case $n=2$.
In this case we cannot introduce a third index $c$ as in the argument above, so a separate discussion is required.
In what follows, we may assume $a=1$ and $b=2$.
Specifically, under the condition $n=2$, equation \eqref{R^Msigma=0} becomes
\begin{equation}\label{R^Msigma=0n=2}
\left\{
\begin{aligned}
 &e_{1}(\kappa_{1}) =  \kappa_{1}^{2} - \kappa_{2} f_{1} + c, \\[4pt]
 &e_{1}(\kappa_{2}) =  \kappa_{1} f_{1} + \kappa_{1}\kappa_{2}, \\[4pt]
 &-e_{2}(f_{1}) + e_{1}(f_{2}) = f_{1}^{2} + f_{2}^{2} + c, \\[4pt]
 &e_{2}(\kappa_{1}) = - \kappa_{2} f_{2} + \kappa_{1}\kappa_{2}, \\[4pt]
 &e_{2}(\kappa_{2}) = \kappa_{1} f_{2} + \kappa_{2}^{2} + c.
\end{aligned}
\right.
\end{equation}
Furthermore, the first equation in \eqref{eq:eek^2} and  \eqref{cnot01} are reduced to the following:

\begin{equation*}
\left\{ \,
\begin{aligned}
&- 3 \{ e _ { 3 } ( \kappa _ { 1 } ) \} ^ { 2 } + \kappa _ { 1 } ^ { 2 } (L-3\lambda) - (K-\lambda) c=0, \\
&3\{e _ { 3 } ( \kappa _ { 1 } )\}^2 +\kappa _ { 2 } ^ { 2 }  (L-3\lambda )
=0.
\end{aligned}
\right.
\end{equation*} 
Eliminating $e_{3}(\kappa_{1})$ from these two equations, we obtain

\[
(\kappa_1^2+\kappa_2^2)(L-3\lambda)-(K-\lambda)c=0
\]
on $\Omega_2$.
Substituting the definitions of $L$ and $K$ yields
\[
(\kappa_1^2+\kappa_2^2)(\kappa_1^2+\kappa_2^2+7c-3\lambda)-(\kappa_1^2+\kappa_2^2+3c-\lambda)c=0.
\]
Let $Q = \kappa_1^2 + \kappa_2^2$. Then we can rewrite this as

\[
Q(Q+7c-3\lambda)-(Q+3c-\lambda)c=0
\]
on $\Omega_2$.
Simplifying, we obtain

\begin{equation} \label{Q^2}
Q^2-3(\lambda-2c)Q+(\lambda-3c)c=0.
\end{equation}
Since this is a quadratic equation in $Q$, its solutions are

\[
Q = \frac{3(\lambda - 2c) \pm \sqrt{9\lambda^{2} - 40\lambda c + 48c^{2}}}{2}
\]
on $\Omega_2$.
Thus $Q$ must be constant. Using $(\ref{R^Msigma=0n=2})$, we compute $e_1(Q) = 0$ as follows:
\[
\begin{aligned}
0 
&= e_1(Q) = e_1(\kappa_1^2 + \kappa_2^2) \\
&= 2\kappa_1\, e_1(\kappa_1) + 2\kappa_2\, e_1(\kappa_2) \\
&= 2\kappa_1(\kappa_1^2 - \kappa_2 f_1 + c)
   + 2\kappa_2(\kappa_1 f_1 + \kappa_1\kappa_2) \\
&= 2\kappa_1(\kappa_1^2 + \kappa_2^2 + c) \\
&= 2\kappa_1 (Q + c).
\end{aligned}
\]
Since $\kappa_{1}\neq 0$ at any point of $\Omega_2$, we obtain $Q + c = 0$ on $\Omega_2$, that is,
\[
\kappa_1^2+\kappa_2^2= -c.
\]

If $c>0$, then the right-hand side is negative, 
while the left-hand side is nonnegative.
This is a contradiction.

If $c<0$ and $\lambda \neq 2c$, substituting this into \eqref{Q^2}, we obtain

\[
\begin{aligned}
0
&= (-c)^2 - 3(\lambda - 2c)(-c) + (\lambda - 3c)c \\
&= c^2 + 3\lambda c - 6c^2 + \lambda c - 3c^2 \\
&= 4c(\lambda - 2c)
\end{aligned}
\]
on $\Omega_2$.
Since $c\neq 0$, it follows that 
$$\lambda = 2c$$
on $\Omega_2$.
This contradicts the assumption $\lambda \neq 2c$.
This contradiction shows that such an open set $\Omega_2$ cannot exist.
Thus, by the same argument as in the case $n \geq 3$, we conclude that
$$
e_{3}(\kappa_1)=e_{3}(\kappa_2)=0.
$$
\end{proof}

\begin{lemma}

Let 
$
\phi : (M^{n+1}(c), g) \longrightarrow (N^n, h)
$
be a $\lambda$-biharmonic Riemannian submersion with an adapted frame 
$
\{e_1, e_2,  \dots ,e_n,e_{n+1}\},
$
 the integrability data 
$
\{f_{ij}^k, \kappa_i, \sigma_{ij}\}_{i,j,k = 1,\dots,n},
$
with $\kappa_2 = \kappa_3 = \dots = \kappa_n =0$, and for any $i=1,\cdots ,n-2$, $\sigma_{i,i+j}=0,~(j\geq2)$. 
Then, the following equations hold. 
\[
\left\{
\begin{array}{l}
\displaystyle
 -\sum_{i=1}^{n} e_i e_i(\kappa_1) + \sum_{i=1}^{n} \sum_{j=1}^{n} P_{ii}^j e_j(\kappa_1) + \kappa_1 e_1(\kappa_1)+ \sum_{i=1}^{n} \sum_{l=1}^{n} \kappa_1 (P_{i1}^l)^2-3\kappa_{1} \sigma_{12}^{2} \\
 \quad - ( n - 1 ) \kappa _ { 1 } c + \kappa_{1}\lambda= 0 \quad (k=1), \\[2mm]

\displaystyle
\sum_{i=1}^{n} \left( 
2 e_i(\kappa_1) P_{i1}^3 + 
\kappa_1 e_i(P_{i1}^3) + 
\kappa_1 P_{i1}^l P_{il}^3 - 
\kappa_i \kappa_1 P_{i1}^3 - 
\kappa_1 P_{ii}^l P_{l1}^3 
\right) -3  \kappa_1 \sigma_{12} \sigma_{23}= 0\quad (k =3), \\[2mm]

\displaystyle
\sum_{i=1}^{n} \left( 
2 e_i(\kappa_1) P_{i1}^k + 
\kappa_1 e_i(P_{i1}^k) + 
\kappa_1 P_{i1}^l P_{il}^k - 
\kappa_i \kappa_1 P_{i1}^k - 
\kappa_1 P_{ii}^l P_{l1}^k 
\right) = 0\quad (k \neq 1,3).
\end{array}
\right.
\]

\end{lemma}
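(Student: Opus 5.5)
The plan is to substitute the normalized integrability data directly into the $\lambda$-biharmonic equation \eqref{eq:lambda}, one component $k$ at a time. The only nontrivial input is the Ricci term $\operatorname{Ricci}^N(d\phi(\mu),d\phi(e_k))$, which we express in terms of the data and the constant curvature $c$ of $M$.

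First, by \eqref{connection} we have $\mu=\sum_i\kappa_i e_i=\kappa_1 e_1$, so $d\phi(\mu)=\kappa_1\varepsilon_1$ and the Ricci term becomes $\kappa_1\operatorname{Ricci}^N(\varepsilon_1,\varepsilon_k)$. To evaluate it we use O'Neill's formula for horizontal vectors, $K^N(X,Y)=K^M(X,Y)+\tfrac34|[X,Y]^{\mathcal V}|^2$. In polarized form, using $A_{e_i}e_j=-\sigma_{ij}e_{n+1}$ (read off from \eqref{connection}), this gives
\[
R^N(\varepsilon_i,\varepsilon_j,\varepsilon_k,\varepsilon_l)\circ\phi=R^M(e_i,e_j,e_k,e_l)+\bigl(2\sigma_{ij}\sigma_{kl}-\sigma_{jk}\sigma_{il}+\sigma_{ik}\sigma_{jl}\bigr)
\]
up to the sign convention fixed in the paper. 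Since $M$ has constant curvature $c$, summing over $i$ yields
\[
\operatorname{Ricci}^N(\varepsilon_1,\varepsilon_k)=(n-1)c\,\delta_{1k}+3\sum_{i}\sigma_{1i}\sigma_{ki}.
\]
Now impose the normalization from Proposition~\ref{proposition2.2}: $\sigma_{1j}=0$ for $j\ge3$. Then $\sum_i\sigma_{1i}\sigma_{ki}=\sigma_{12}\sigma_{k2}$. This equals $\sigma_{12}^2$ for $k=1$, vanishes for $k=2$ (skew-symmetry), equals $\sigma_{12}\sigma_{32}=-\sigma_{12}\sigma_{23}$ for $k=3$, and equals $\sigma_{12}\sigma_{k2}$ for $k\ge4$. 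For $k\ge4$ we have $\sigma_{2k}=0$ when $n\ge4$, since $2\le n-2$ and $k=2+(k-2)$ with $k-2\ge2$. Hence the Ricci term is nonzero only for $k=1$, where it is $\kappa_1\bigl((n-1)c+3\sigma_{12}^2\bigr)$, and for $k=3$, where it is $-3\kappa_1\sigma_{12}\sigma_{23}$. This accounts for the $\sigma$ terms in the statement and their signs.

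Next we simplify the remaining terms using $\kappa_j=0$ for $j\ge2$. For $k\ge2$, every term involving $\kappa_k$ or its derivatives drops out, including $-\sum_{i=1}^{n+1}e_ie_i(\kappa_k)$, $P_{ii}^je_j(\kappa_k)$ and $\kappa_ie_i(\kappa_k)$. In the bracket only $j=1$ survives. Multiplying by $-1$ gives the stated $k=3$ and $k\neq1,3$ equations; the $k=2$ case falls under ``$k\neq1,3$'' since its Ricci term vanishes.

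For $k=1$ the bracket terms also reduce to $j=1$:
\begin{itemize}
\item $P_{i1}^1=0$ by the skew-symmetry $P_{ij}^k=-P_{ik}^j$ (metric compatibility), which kills $2e_i(\kappa_1)P_{i1}^1$, $\kappa_1e_i(P_{i1}^1)$ and $\kappa_1\kappa_iP_{i1}^1$.
\item $\kappa_1P_{ii}^lP_{l1}^1=0$ for the same reason.
\item $\kappa_1P_{i1}^lP_{il}^1=-\kappa_1(P_{i1}^l)^2$, which produces the term $+\kappa_1\sum(P_{i1}^l)^2$.
\end{itemize}
The term $-e_{n+1}e_{n+1}(\kappa_1)$ vanishes by Lemma~\ref{lem:fiber-constant} under the theorem's hypotheses. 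The lemma's statement omits it, so I will invoke that lemma, or rather remark that this is the setting in which the lemma is used. The remaining terms are as stated, and $+\kappa_1\lambda$ carries over.

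The main obstacle is getting the O'Neill correction term right: its coefficient $3$ and its signs relative to the paper's convention for $R$ and $\sigma_{ij}$. I will check the $k=1$ coefficient against the constraint $-R_{abab}$ in \eqref{eq:R^M}, whose $3\sigma_{ab}^2$ term confirms the factor $3$. The vanishing of $\sigma_{2k}$ for $k\ge4$, needed when $n\ge4$, also has to be checked against the normalization.
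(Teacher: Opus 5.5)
Your proposal is correct and follows the paper's route: a direct substitution of $\kappa_2=\cdots=\kappa_n=0$ and the $\sigma$-normalization into \eqref{eq:lambda}. Your O'Neill computation, $\operatorname{Ricci}^N(\varepsilon_1,\varepsilon_k)=(n-1)c\,\delta_{1k}+3\sigma_{12}\sigma_{k2}$, supplies exactly the $\sigma$-terms and signs that the paper leaves to its earlier work. You are also right that the stated $k=1$ equation silently drops $-e_{n+1}e_{n+1}(\kappa_1)$; this is justified only through Lemma~\ref{lem:fiber-constant} under hypotheses (1) or (2), which is precisely the setting in which the paper applies this lemma.
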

\begin{proof}
Using the assumptions on the adapted frame and substituting
$\kappa_2 = \kappa_3 = \dots = \kappa_n =0$ 
and for any $i=1,\cdots ,n-2$, $\sigma_{i,i+j}=0,~(j\geq2)$ into (\ref{eq:lambda}),
we obtain the above system by a direct computation.
Similar calculations can be found in the proof of Lemma~3.3 in \cite{MS2025}.
\end{proof}

\begin{theorem}[Theorem~\ref{thA} (restated)]
For $n\geq2$, 
let $\phi : (M^{n+1}(c), g) \longrightarrow (N^n, h)$ be a Riemannian submersion from a Riemannian manifold with constant sectional curvature $c$. Assume that one of the following holds.
\begin{enumerate}
    \item $c\geq 0$,  
    \item $c<0$ and $\lambda \neq 2(n-1)c$.
\end{enumerate}

Then $\phi$ is $\lambda$-biharmonic if and only if it is harmonic.
\end{theorem}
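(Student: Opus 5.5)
Harmonic maps are trivially $\lambda$-biharmonic, so only the converse needs proof. The tension field of a Riemannian submersion with one-dimensional fibres is $\tau(\phi)=-d\phi(\mu)=-\sum_k\kappa_k\varepsilon_k$, up to sign. Hence it suffices to show that $\kappa=(\kappa_1,\dots,\kappa_n)^{\mathsf T}$ vanishes identically.

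We argue by contradiction. Suppose $\kappa\neq0$ on some nonempty open set $U$. By Proposition~\ref{proposition2.2} we shrink to $V\subset U$ and choose an adapted frame with $\kappa_2=\cdots=\kappa_n=0$ and $\sigma_{i,i+j}=0$ for $j\ge2$. Shrinking further, we may assume $\kappa_1\neq0$ on $V$. By Lemma~\ref{lem:fiber-constant}, which applies exactly under hypotheses (1) or (2), all integrability data are constant along the fibres. In particular $e_{n+1}(\kappa_1)=0$, $e_{n+1}(\sigma_{ij})=0$ and $e_{n+1}(f_{ij}^k)=0$.

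\textbf{Step 1: $\sigma_{12}=0$.} We feed fibre-constancy into the curvature identities \eqref{eq:R^M}. The fourth equation with $a\neq c$ becomes purely horizontal: $e_a(\kappa_c)=-\sigma_{cl}\sigma_{al}+P^l_{ac}\kappa_l+\kappa_a\kappa_c$. The third equation gives
\[
e_a(\kappa_a)=c-\textstyle\sum_l\sigma_{al}^2+P^l_{aa}\kappa_l+\kappa_a^2 .
\]
Now differentiate the first equation of \eqref{eq:R^M} by $e_{n+1}$, using $e_{n+1}e_a=e_ae_{n+1}-\kappa_a e_{n+1}$. Since everything is fibre-constant, this should give only algebraic information. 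A cleaner route is to commute $e_a$ and $e_{n+1}$ on $\kappa_c$: since $[e_a,e_{n+1}]=\kappa_a e_{n+1}$ and $e_{n+1}(\kappa_c)=0$, we get $e_{n+1}(e_a(\kappa_c))=0$. Applying $e_{n+1}$ to the formula for $e_a(\kappa_c)$, the $\sigma$ and $P$ terms are fibre-constant, so this step is consistent but yields nothing new.

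The genuinely useful relation comes from $[e_1,e_2]=f^k_{12}e_k-2\sigma_{12}e_{n+1}$ applied to $\kappa_1$. The left-hand side is computable from the formulas above, and the vertical part contributes $-2\sigma_{12}e_{n+1}(\kappa_1)=0$. Comparing with the first equation of the preceding lemma (the case $k=1$), I expect to obtain an algebraic relation forcing $\sigma_{12}\kappa_1=0$, hence $\sigma_{12}=0$. Similarly, the $k=3$ equation should give $\sigma_{23}=0$, and inductively all $\sigma_{ij}=0$, mirroring the biharmonic case of \cite{MS2025}.

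\textbf{Step 2: reduction to an ODE-type identity.} Once all $\sigma_{ij}=0$ we are in the situation of \eqref{R^Msigma=0}: $e_a(\kappa_c)=\delta_{ac}c+P^l_{ac}\kappa_l+\kappa_a\kappa_c$. Substituting this into the $k=1$ equation of the preceding lemma, the second-derivative terms $e_ie_i(\kappa_1)$ become expressible via the same identities. The same simplification that produced \eqref{eq:eek} then gives
\[
0=e_{n+1}e_{n+1}(\kappa_1)=-\kappa_1\bigl(\kappa_1^2+(2n-1)c-\lambda\bigr).
\]
Hence $\kappa_1^2=\lambda-(2n-1)c$ is constant on $V$. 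Next we apply $e_1$ and use $e_1(\kappa_1)=c+P^l_{11}\kappa_l+\kappa_1^2=c+\kappa_1^2$. Here $P^1_{11}=0$ and $\kappa_l=0$ for $l\ge2$. This gives $0=2\kappa_1(\kappa_1^2+c)$, so $\kappa_1^2=-c$. Together with the previous relation this yields $\lambda=(2n-1)c-c=2(n-1)c$, and in addition $c<0$ because $\kappa_1\neq0$.

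This contradicts (1) when $c\ge0$: then $\kappa_1^2=-c\le0$ forces $\kappa_1=0$. It contradicts (2) when $\lambda\neq2(n-1)c$. Hence $\kappa\equiv0$ and $\phi$ is harmonic.

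The main obstacle is Step 1, showing that the $\sigma_{ij}$ vanish when $\kappa$ is fibre-constant. Lemma~\ref{lem:fiber-constant} gave $\sigma=0$ only where $e_{n+1}(\kappa_a)\neq0$, so that route is unavailable here. I would first try to combine the $k=3$ and $k\neq1,3$ equations of the preceding lemma with the curvature identities for $R^M_{a(n+1)c(n+1)}$ and the Jacobi identity. If that fails, I would instead keep the $\sigma_{12}^2$ term and derive $\kappa_1^2=\lambda-(2n-1)c+3\sigma_{12}^2$-type relations, then eliminate $\sigma_{12}$ using $e_1$ and $e_2$ derivatives as in the $n=2$ computation in the proof of Lemma~\ref{lem:fiber-constant}.
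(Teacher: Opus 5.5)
Your overall frame is the paper's: a contradiction on an open set where $\kappa\neq0$, normalization by Proposition~\ref{proposition2.2}, and fibre-constancy from Lemma~\ref{lem:fiber-constant}. Your Step~2 is also correct. Once every $\sigma_{ij}$ vanishes, the $k=1$ equation gives $\kappa_1^2=\lambda-(2n-1)c$, and $e_1(\kappa_1)=\kappa_1^2+c$ then forces $\kappa_1^2=-c$ and $\lambda=2(n-1)c$. This is how the paper closes the case $n=2$ and the case $\lambda=2(n-1)c$.

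The gap is Step~1, and it is not a technicality. The mechanism you propose yields nothing. With fibre-constancy, \eqref{eq:R^M3} gives
\[
e_2(\kappa_1)=0,\quad e_1(\kappa_1)=c-\sigma_{12}^2+\kappa_1^2,\quad e_3(\kappa_1)=\sigma_{12}\sigma_{23},\quad f^1_{12}=P^1_{12}=0,\quad e_2(\sigma_{12})=P^l_{21}\sigma_{l2}=0 .
\]
So applying $[e_1,e_2]=f^k_{12}e_k-2\sigma_{12}e_{n+1}$ to $\kappa_1$ collapses to $f^3_{12}\sigma_{12}\sigma_{23}=0$, and to the tautology $0=0$ when $n=2$. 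Such commutator identities are integrability consequences of the structure equations and carry no information about $\lambda$. You also do not say what combination with the $k=1$ equation would produce $\sigma_{12}\kappa_1=0$. Your proposed induction to $\sigma_{23}=\cdots=\sigma_{n-1,n}=0$ is likewise unsupported: in the paper it only works after substituting $\lambda=2(n-1)c$ into \eqref{1}.

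More fundamentally, for $n\ge3$ the claim $\sigma_{12}=0$ is the whole theorem in disguise. Substituting \eqref{eq:R^M3} into the $k=1$ equation gives \eqref{1}. Differentiating \eqref{1} along $e_1$ gives \eqref{e1}:
\[
4\sigma_{12}^2+\frac{\sigma_{12}^2\sigma_{23}^2}{\kappa_1^2}=\lambda-2(n-1)c .
\]
So when $\lambda>2(n-1)c$, every point of your open set has $\sigma_{12}\neq0$, and proving $\sigma_{12}=0$ there means proving the open set is empty. This is why the paper never passes through $\sigma=0$ in that regime. Instead it proceeds as follows:
\begin{enumerate}
\item It splits on the sign of $\lambda-2(n-1)c$; the negative sign is immediately impossible by \eqref{e1}.
\item For the positive sign, it differentiates \eqref{1} along $e_3$ to force $\sigma_{23}^2+\sigma_{34}^2+4\kappa_1^2=c$, which is impossible for $c\le0$.
\item For $c>0$, it takes two further $e_1$-derivatives and reaches a contradiction through a chain of inequalities in $a=\sigma_{12}^2/c$, $k=\kappa_1^2/c$ and $l=(\lambda-2(n-1)c)/c$.
\end{enumerate}

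Your fallback, iterating $e_1$-derivatives of the $k=1$ equation while keeping $\sigma_{12}$, is exactly what the paper does for $n=2$. There it gives $\sigma_{12}=0$, $\kappa_1^2=-c$ and $\lambda=2c$. For $n\ge3$, however, the extra term $\sigma_{12}^2\sigma_{23}^2/\kappa_1^2$ prevents the elimination from closing. The missing ingredients are the sign split, the $e_3$-derivative, and the inequality analysis for $c>0$.
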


\begin{proof} 

By using the connection relations \eqref{connection} 
and the fact that $e_1, \dots, e_n$ are horizontal while $e_{n+1}$ is vertical (specifically $d\phi(e_{n+1})=0$), 
the tension field of the Riemannian submersion $\phi$ is calculated as
\begin{align*}
\tau(\phi)& = \nabla ^\phi_{e_i} d\phi(e_i) - d\phi(\nabla^M_{e_i} e_i)\\
&= -d\phi(\nabla^M_{e_{n+1}} e_{n+1})\\
&= -d\phi(\sum_{i=1}^n \kappa_i e_i)\\
&=- \sum_{i=1}^n \kappa_i \varepsilon_i.
\end{align*}

Since every harmonic map is $\lambda$-biharmonic, it remains to prove the converse under the assumptions of the theorem.
Assume that $\phi$ is $\lambda$-biharmonic. We prove that $\phi$ is harmonic.

Suppose, to the contrary, that $\phi$ is not harmonic. 
Then there exists a point $p\in M$ such that $\tau(\phi)(p)\neq 0$.
Hence, by continuity, there exists a nonempty open neighborhood $\Omega_1$ of $p$ such that
\[
\kappa=(\kappa_1,\kappa_2,\dots,\kappa_n)\neq 0
\]
on $\Omega_1$.
By Proposition~\ref{proposition2.2},
after replacing $\Omega_1$ by a smaller nonempty open subset if necessary, we may
choose an adapted orthonormal frame \(\{e_1, e_2,\dots, e_n,e_{n+1}\}\) on $\Omega_1$ such that
$$
\kappa_2=\cdots=\kappa_n=0
$$
and
$$
\sigma_{i,i+j}=0\ (i=1,\dots,n-2,\ j\geq2)
$$
on $\Omega_1$.
Consequently, we have
\[
\tau(\phi) = -\kappa_1 \varepsilon_1
\]
on $\Omega_1$.
With respect to this adapted frame on $\Omega_1$, the curvature equation \eqref{eq:R^M}
reduces to

\begin{equation}
\left\{ \,
\begin{aligned}
& R_{i(n+1) 1(n+1)}^{M} :\quad e_{i}\left(\kappa_{1}\right)=-\sigma_{12} \sigma_{i 2}+\delta_{1 i}\left(\kappa_{1}^{2}+c\right),\\
& R_{1(n+1) 1(n+1)}^{M} :\quad e_{1}\left(\kappa_{1}\right)=-\sigma_{12}^{2}+\kappa_{1}^{2}+c,\\
& R_{i(n+1) 12}^{M} :\quad e_{i}\left(\sigma_{12}\right)=P_{i 1}^{l} \sigma_{l 2}+\kappa_{1} \sigma_{i 2}+\delta_{i 1} \kappa_{1} \sigma_{12},\\
&R_{i(n+1) m(n+1)}^{M} :\quad P_{i m}^{1}=\frac{1}{\kappa_{1}}\left(\sigma_{ml}  \sigma_{i l}-\delta_{m i} c\right)\quad(m \neq 1),\\
& R_{i(n+1) j 1}^M :\quad P_{i j}^{2} \sigma_{12}=P_{i l}^{1} \sigma_{l j}-\kappa_{1} \sigma_{i j}\quad(j \geq 3),\\
& R_{1(n+1) 2 m}^{M} :\quad e_{1}\left(\sigma_{2 m}\right)=P_{12}^{l} \sigma_{l m}-P_{1 m}^{l} \sigma_{l 2}-\kappa_{m} \sigma_{12}+\kappa_{1} \sigma_{2 m},\\
&R _ { 1 (n + 1)im}^M :\qquad e _ { 1 } ( \sigma _ { im} ) =-P_ { 1m} ^ { l}  \sigma _ { li}+P_ { 1i } ^ { l} \sigma _ { lm } -\kappa _ { m}\sigma _ { 1i } + \kappa _ { 1 } \sigma _ { im} +\kappa _ { i} \sigma _ { 1m },\\ 
&R _ { 1 (n + 1)im}^M :\qquad e _ { 1 } ( \sigma _ { im} ) =-P_ { 1m} ^ { l}  \sigma _ { li}+P_ { 1i } ^ { l} \sigma _ { lm } + \kappa _ { 1 } \sigma _ { im} \quad(i,m\ge3)
\end{aligned}
\label{eq:R^M3}
\right.
\end{equation} 
on $\Omega_1$.
Consequently, we have
\begin{equation}\label{de1}
\sum_{i,m=1}^n e _ { 1 } ( \sigma _ { i m } ^ { 2 } ) =4 \kappa_{1} \sigma_{12}^{2}+2 \kappa_{1}\sum_{i,m=1}^n \sigma_{i m}^{2}, 
\quad
 e_1(\sigma _{12}^2)
=2\frac{\sigma_{12}^2}{\kappa_1}\Big(2\kappa_1^2-\sum_{l=2}^n\sigma_{l2}^2\Big)
\end{equation}
on $\Omega_1$.
Substituting \eqref{eq:R^M3} 
into the $\lambda$-biharmonic equation 
\[
 -\sum_{i=1}^{n} e_i e_i(\kappa_1) + \sum_{i=1}^{n} \sum_{j=1}^{n} P_{ii}^j e_j(\kappa_1) + \kappa_1 e_1(\kappa_1)+ \sum _ { i = 1 } ^ { n } \kappa _ { 1 } ( P_ { i 1 } ^ { l } ) ^ { 2 } -3\kappa_{1} \sigma_{12}^{2} 
- ( n - 1 ) \kappa _ { 1 } c + \kappa_{1}\lambda= 0
 \]
we obtain
\[
\kappa_1 ( -\sigma_{12}^2 + 2\sum_{i, m=1}^{n} \sigma_{i m}^{2}- \kappa_{1}^{2}-(2 n-1)c +\lambda)=0
\]
on $\Omega_1$.
The detailed computations leading to this result are analogous to those in the proof of Theorem 3.4 in \cite{MS2025}.
Since $\kappa_{1}\neq 0$ at any point of $\Omega_1$, we obtain
\begin{equation}\label{1}
 -\sigma_{12}^2 + 2\sum_{i, m=1}^{n} \sigma_{i m}^{2}- \kappa_{1}^{2}-(2 n-1)c +\lambda=0
\end{equation}
on $\Omega_1$.
 From this point, we divide the argument into the cases $n=2$ and $n\ge3$.

When $n=2$, equation \eqref{1} reduces to

$$3\sigma_{12}^2  - \kappa_1^2 - 3c + \lambda = 0.$$
Differentiating \eqref{1} by $e_{1}$ and using \eqref{de1} and \eqref{eq:R^M3}, we obtain
\begin{equation}\label{2}
7\sigma_{12}^2-\kappa_1^2-c=0
\end{equation}
on $\Omega_1$.
Repeating the same argument gives
\begin{equation}\label{3}
\kappa_1^2=15\sigma_{12}^2-c
\end{equation}
on $\Omega_1$.
From \eqref{2} and \eqref{3}, we deduce
\begin{equation}\label{4}
\sigma_{12}^2=0,\kappa_1^2=-c
\end{equation}
on $\Omega_1$.
If $c \geq 0$, this is a contradiction.
If $c < 0$ and $\lambda \neq 2c$, then, by substituting \eqref{4} into \eqref{1}, we have
$
\lambda - 2c = 0,
$
which contradicts the assumption $\lambda \neq 2c$.
Therefore, $\phi$ is harmonic when $n=2$.

We next assume $n\geq3$ and proceed similarly.
By differentiating both sides of \eqref{1} by $e_1$ and substituting \eqref{1} and (\ref{de1}), we have
\begin{equation}\label{n=2}
4\sigma_{12}^2+\frac{\sigma_{12}^2}{\kappa_1^2}\sum_{l=2}^{n} \sigma_{l2}^{2}+2( n-1)c-\lambda=0
\end{equation}
on $\Omega_1$.
By the proposition \ref{proposition2.2}, 
the second term in \eqref{n=2} reduces to the single contribution $l=3$. 
Therefore, on $\Omega_1$,
\begin{equation}\label{e1}
4\sigma_{12}^2+\frac{1}{\kappa_1^2}\sigma_{12}^2\sigma_{23}^{2}
=\lambda-2( n-1)c.
\end{equation}

\textbf{Case I:} $\lambda=2(n-1)c$. 
We first consider cases (d) and (e) in TABLE~\ref{class}.
Assume that $\lambda=2(n-1)c$, then we have $\sigma_{12}\equiv0$ on $\Omega_1$.
For $j=2,3,\cdots ,n-1$, by the first equation of \eqref{eq:R^M}, 
\[
0=e_{j}(\sigma_{1,j+1})=P_{j1}^l\sigma_{l,j+1}+\kappa_1\sigma_{j,j+1}
\]
on $\Omega_1$.
By the fourth equation of \eqref{eq:R^M3}, we have
\[
\sigma_{j,j+1}
\left(
-\sigma_{j-1,j}^2-\sigma_{j,j+1}^2-\sigma_{j+1,j+2}^2+\kappa_1^2+c
\right)
=0~~\text{for}~n \geq 4~\text{and}~ j=2,3,4,\cdots, n-2,
\]
and 
\[
\sigma_{n-1,n}
\left(
-\sigma_{n-2,n-1}^2-\sigma_{n-1,n}^2+\kappa_1^2+c
\right)
=0
\]
on $\Omega_1$.
Assume that there exists a nonempty open set
$\Omega_2\subset\Omega_1$ such that
$\sigma_{23}\neq0$
at any point of $\Omega_2$.
Then on $\Omega_2$, we have
\[
\kappa_1^2=\sigma_{23}^2+\sigma_{34}^2-c.
\]
Substituting this into \eqref{1}, which also holds on $\Omega_2$, yields a contradiction.
Thus $\sigma_{23}=0$ on $\Omega_1$.
We iterate the same argument. 
Assume that there exists a nonempty open set
$\Omega_j\subset\Omega_1$ such that 
$\sigma_{j,j+1}\neq0$
at any point of $\Omega_j$.
Then on $\Omega_j$, we have
\[
\kappa_1^2=\sigma_{j,j+1}^2+\sigma_{j+1,j+2}^2-c.
\]
Substituting this into \eqref{1} on $\Omega_j$ again gives a contradiction.
Therefore $\sigma_{j,j+1}=0$ on $\Omega_1$ for all $j=2,\dots,n-2$.
 Finally we have
 \[
 \sigma_{n-1,n}
 \left(
-\sigma_{n-1,n}^2+\kappa_1^2+c
 \right)=0
 \]
on $\Omega_1$.
Assume that there exists a nonempty open set
$\Omega_{n-1}\subset\Omega_1$ such that
$\sigma_{n-1,n}\neq0$
at any point of $\Omega_{n-1}$.
Then on $\Omega_{n-1}$, we have
\[
\kappa_1^2=\sigma_{n-1,n}^2-c
\]
on $\Omega_1$.
Substituting this into \eqref{1}, we have a contradiction.
Hence $\sigma_{n-1,n}=0$ on $\Omega_1$.
Therefore, $\sigma_{ij}=0$ for all $i,j$. By \eqref{1}, we have 
\[
\kappa_{1}^2=-c
\] 
on $\Omega_1$.
If $c \ge 0$, we obtain a contradiction. Therefore, $\phi$ is harmonic in cases (d) and (e) in TABLE~\ref{class}.


\textbf{Case II:}  $\lambda \not=2(n-1)c$.
We next consider cases (g), (h), and (i) in TABLE~\ref{class}.
From equation~\eqref{e1},
\[
4\sigma_{12}^2+\frac{\sigma_{12}^2}{\kappa_1^2}\sum_{l=2}^{n} \sigma_{l2}^{2}
=\lambda-2( n-1)c.
\]
If $\lambda - 2(n-1)c < 0$, we obtain a contradiction.
Assume next that $\lambda - 2(n-1)c > 0$.

Here and in what follows, we adopt the convention that 
$\sigma_{ij}=0$ whenever an index exceeds $n$. 
In particular, when $n=3$, we have $\sigma_{34}=\sigma_{45}=\cdots=\sigma_{n-1,n}=0$.

Differentiating both sides of \eqref{1} by $e_3$, we obtain
\[
\sigma_{12}\sigma_{23}
\bigl(
\sigma_{23}^2 + \sigma_{34}^2 + 4\kappa_1^2 - c
\bigr)
= 0 
\]
on $\Omega_1$.
Assume that there exists a nonempty open set
 $\Lambda \subset \Omega_{1}$ such that
$
\sigma_{23}^2+\sigma_{34}^2+4\kappa_1^2-c\not=0
$
at any point of  $\Lambda$.
Then we have 
$
\sigma_{12}\sigma_{23}=0
$
 on $\Lambda$.
Since $\sigma_{12}\neq 0$ by \eqref{e1}, it follows that $\sigma_{23}=0$.
Substituting this into~\eqref{e1}, we obtain
$
4\sigma_{12}^2 = \lambda - 2(n-1)c > 0 .
$
Hence $\sigma_{12}$ is a nonzero constant.
Differentiating $\sigma_{12}$ by $e_1$ and using \eqref{eq:R^M3}, we obtain
$
 e_1(\sigma_{12})=2\kappa_1 \sigma_{12} = 0 ,
$
which yields $\kappa_1=0$, a contradiction  to the fact that
$ \kappa_1 \neq 0 $ on $ \Lambda \subset \Omega_1$.
Therefore,
\begin{equation}\label{bdf}
\sigma_{23}^2 + \sigma_{34}^2 + 4\kappa_1^2 - c = 0 
\end{equation}
on $\Omega_1$.
Consequently, if $c\le 0$, we arrive at a contradiction.
Therefore, $\phi$ is harmonic in cases (h) and (i) in TABLE~\ref{class}.

Assume next that $c > 0$.
Substituting \eqref{bdf} into \eqref{1}, we obtain
\begin{equation}\label{n=5}
3\sigma_{12}^2 - 17\kappa_1^2  +(-2n+5)c + \lambda +  2\sum_{i,m=4}^{n}\sigma_{im}^{2}= 0
\end{equation}
on $\Omega_1$.
Differentiating \eqref{n=5} by $e_1$ and using \eqref{eq:R^M3}, \eqref{de1}, and \eqref{e1} we obtain
\begin{equation}\label{n=5*}
35\sigma_{12}^2 - 17\kappa_1^2 + (6n-23)c - 3\lambda  +2\sum_{i,m=4}^{n}\sigma_{im}^{2} +\frac{4\sigma_{34}^2\sigma_{45}^2}{\kappa_1^2}= 0
\end{equation}
 on $\Omega_1$.
Subtracting \eqref{n=5} from \eqref{n=5*}, we obtain
\begin{equation}\label{n=5**}
8\sigma_{12}^2 + (2n-7)c - \lambda + \frac{\sigma_{34}^2\sigma_{45}^2}{\kappa_1^2} = 0
\end{equation}
on $\Omega_1$.
Differentiating \eqref{n=5**} by $e_1$ and using \eqref{eq:R^M}, \eqref{eq:R^M3}, \eqref{de1}, and \eqref{e1}, we obtain
\begin{equation}\label{n=5***}
8(2\sigma_{12}^2 \kappa_1^2 - \sigma_{12}^2\sigma_{23}^2) +  \frac{\sigma_{34}^2\sigma_{45}^2}{\kappa_1^2}(\sigma_{12}^2-3\kappa_1^2-\sigma_{45}^2-\sigma_{56}^2) = 0
\end{equation}
on $\Omega_1$.
Set 
\[
l = \frac{\lambda - 2(n-1)c}{c} > 0.
\]
Substituting \eqref{e1}, \eqref{n=5}, and \eqref{n=5**} into \eqref{n=5***},
we obtain 
\[
8\kappa_1^2(-lc+6\sigma_{12}^2)
+(-8\sigma_{12}^2+5c+lc)(\sigma_{12}^2-3\kappa_1^2-\sigma_{45}^2-\sigma_{56}^2)
=0
\]
on $\Omega_1$.
Set
\[
a = \frac{\sigma_{12}^2}{c}, \quad 
b = \frac{\sigma_{23}^2}{c}, \quad
d = \frac{\sigma_{34}^2}{c}, \quad
e = \frac{\sigma_{45}^2}{c}, \quad
f = \frac{\sigma_{56}^2}{c}, \quad
k = \frac{\kappa_1^2}{c}. \quad 
\]
Then we obtain
\begin{equation}\label{(*)}
8k(-l+6a)+(-8a+5+l)(a-3k-e-f)=0
\end{equation}
on $\Omega_1$.
To derive a contradiction from $\eqref{(*)}$, we will now investigate the constraints on $a$, $k$, and $l$.
From \eqref{e1}, we have
\[
0 \leq \frac{\sigma_{12}^2 \sigma_{23}^2}{\kappa_1^2} = lc - 4\sigma_{12}^2
\]
on $\Omega_1$.
Since $c > 0$, $a>0$, this yields
\begin{equation}\label{x<z/4}
0<a \leq \frac{l}{4}
\end{equation}
on $\Omega_1$.
From \eqref{e1} and \eqref{bdf}, we have
\[
0 \leq \sigma_{34}^2
=c-4\kappa_1^2-\sigma_{23}^2
=c-\frac{\kappa_{1}^2}{\sigma_{12}^2}lc
\]
which implies
\begin{equation}\label{y<x/z}
k \leq  \frac{a}{l}
\end{equation}
on $\Omega_1$.
On the other hand, \eqref{1} can be rewritten as
\[
3\sigma_{12}^2
+4(\sigma_{23}^2
+\sigma_{34}^2)
+4\sum_{i=4}^{n-1}\sigma_{i,i+1}^{2}
-\kappa_1^2
+\lambda-(2n-1)c
=0.
\]
Substituting \eqref{bdf} into this equation yields
\begin{equation}\label{atkl}
3\sigma_{12}^2
+4\sum_{i=4}^{n-1}\sigma_{i,i+1}^{2}
-17\kappa_1^2
+\lambda-2(n-1)c+3c
=0
\end{equation}
on $\Omega_1$.
From \eqref{atkl}, we have
\[
0\geq-4\sum_{i=4}^{n-1}\sigma_{i,i+1}^{2}
=3\sigma_{12}^2
-17\kappa_1^2
+\lambda-2(n-1)c+3c,
\]
which implies
\begin{equation}\label{<y}
 \frac{3a+l+3}{17}\leq k
\end{equation}
on $\Omega_1$.
Combining \eqref{x<z/4}, \eqref{y<x/z}, and \eqref{<y}, we obtain
\[
 \frac{3a+l+3}{17}\leq k \leq \frac{a}{l} \leq \frac{1}{4}
\]
Hence,
\[
l \leq \frac{5}{4}-3a
\]
and since $a > 0$ and $l > 0$, we obtain
\begin{equation}\label{z<5/4}
0< l<\frac{5}{4}
\end{equation}
on $\Omega_1$.
Moreover,  by \eqref{x<z/4} and \eqref{z<5/4},
 \[
 -8a+5+l \geq  
 	-l+5
	>\frac{15}{4}
	     >0.
 \]
Thus,
 \begin{equation}\label{424x-61z-145<0}
-8a+5+l   >0
\end{equation}
on $\Omega_1$.
Using \eqref{424x-61z-145<0}, the left-hand side of \eqref{(*)} satisfies
\begin{align*}
0      &= 8k(-l+6a)+(-8a+5+l)(a-3k-e-f) \\
	&< 8k(-l+6a)+(-8a+5+l)(a-3k) \\
	&=(-8a+5+l)a +k(72a-11l-15) \\
\end{align*}
on $\Omega_1$.
Here, by \eqref{x<z/4} and \eqref{z<5/4},
\[
72a-11l-15
 \leq 7l-15
<-\frac{25}{4}
<0.
\]
Hence
 \begin{equation}\label{72a-11l-15<0}
72a-11l-15
<0.
\end{equation}
on $\Omega_1$.
Therefore, by \eqref{72a-11l-15<0} and \eqref{<y}, we have
\[
k(72a-11l-15) \leq  \frac{3a+l+3}{17}  (72a-11l-15).
\]
Thus,
\begin{align*}
0      &< (-8a+5+l)a +k(72a-11l-15) \\
	&\leq  (-8a+5+l)a +\frac{3a+l+3}{17}  (72a-11l-15) .
\end{align*}
Consequently,
\begin{align*}
0      &< 80a^2+256a+56al-11l^2-48l-45 \\
	&\leq 8l^2+16l-45 
	<-\frac{25}{2}<0
\end{align*}
on $\Omega_1$,
where the second inequality follows from \eqref{x<z/4}, and the third inequality follows from \eqref{z<5/4}.
This is a contradiction.
Therefore, $\phi$ is harmonic in case (g) in TABLE~\ref{class}.



\end{proof} 

\begin{example} \label{example1}
Let $H^m(c)$ and $H^n(c)$ be hyperbolic spaces with $m > n$, represented as
\begin{align*}
H^m(c) &= \{(x_1, \dots, x_{n-1}, x_n, \dots, x_{m-1}, z) \mid z > 0\}, \\
H^n(c) &= \{(x_1, \dots, x_{n-1}, z) \mid z > 0\},
\end{align*}
with the standard metrics
\begin{align*}
g &= -\frac{1}{cz^2} \left( \sum_{k=1}^{m-1} dx_k^2 + dz^2 \right), \\
h &= -\frac{1}{cz^2} \left( \sum_{i=1}^{n-1} dx_i^2 + dz^2 \right).
\end{align*}
Then, $\phi: H^m(c) \to H^n(c)$, 
\[
\phi(x_1, \dots, x_{n-1}, x_n, \dots, x_{m-1}, z) = (x_1, \dots, x_{n-1}, z)
\]
is a proper $\lambda$-biharmonic Riemannian submersion with $\lambda=2(n-1)c$.

An orthonormal basis for $(H^m(c), g)$ is given by
\begin{align*}
e_i &= \sqrt{-c}z \partial_{x_i} \quad (1 \le i \le n-1), \\
e_n &= \sqrt{-c}z \partial_z, \\
e_{n+j} &= \sqrt{-c}z \partial_{x_{n-1+j}} \quad (1 \le j \le m-n).
\end{align*}

Consequently, the tension field $\tau(\phi)$ and the bitension field $\tau_2(\phi)$ are given by
\begin{align*}
\tau(\phi) &= -(m-n) \sqrt{-c}\,d\phi(e_n) \neq 0, \\
\tau_2(\phi) &= 2(-c)^\frac{3}{2}(n-1)(m-n) d\phi(e_n).
\end{align*}
Now, consider the $\lambda$-bitension field defined by 
$\tau_{2,\lambda}(\phi) = \tau_2(\phi) - \lambda \tau(\phi)$.
 By setting $\lambda = 2(n-1)c$, we obtain
\begin{align*}
\tau_{2,\lambda}(\phi) 
&= \tau_2(\phi) - 2(n-1)c\tau(\phi) \\
&= 2(-c)^\frac{3}{2}(n-1)(m-n) d\phi(e_n) - 2(n-1)c \{ -(m-n)\sqrt{-c} d\phi(e_n) \} \\
&= 0.
\end{align*}
Thus, $\phi$ is a proper $\lambda$-biharmonic Riemannian submersion with $\lambda=2(n-1)c$.
\end{example}

\begin{remark}
When $n=1$, we have
\[
2(n-1)c=0,
\]
so that the cases (c) and (f) in Table~\ref{class}
reduce to the same case.
Moreover, it is known that the map
\[
\phi:(\mathbb H^2,\tfrac{dx^2+dy^2}{y^2})
\longrightarrow
\bigl((0,\infty),\tfrac{dy^2}{y^2}\bigr),
\qquad
\phi(x,y)=y,
\]
is a proper biharmonic Riemannian submersion (see, for example,
\cite{U2026}). This is precisely the case $m=2$ and $n=1$ of
Example~\ref{example1}.

\end{remark}


\section*{Acknowledgments}
The authors would like to thank Eric Loubeau and Cezar Oniciuc for valuable discussions and useful suggestions on an earlier version of this paper.






\bibliographystyle{amsbook}

\end{document}